\def\bee{\begin{equation*}}
\def\eee{\end{equation*}}
\def\be{\begin{equation}}
\def\ee{\end{equation}}
\def\C{{\mathbb C}}
\def\R{{\mathbb R}}
\def\Q{{\mathbb Q}}
\def\Z{{\mathbb Z}}
\definecolor{Maroon}{rgb}{0.8, 0.0, 0.0}
\begin{document}
\bibliographystyle{plain}

\markboth{David Auckly}
{TWO-FOLD BRANCHED COVERS}

\catchline{}{}{}{}{}

\title{TWO-FOLD BRANCHED COVERS}

\author{DAVID AUCKLY}

\address{Mathematics Department; Kansas State University; Manhattan, KS 66502}
\maketitle
\centerline{\tt{dav@math.ksu.edu}}



\begin{abstract}
Many three dimensional manifolds are two-fold branched covers of the three
dimensional sphere. However, there are some that are not. This paper includes
exposition about two-fold branched covers and includes many examples. It shows that there are
three dimensional homology spheres that do not two-fold branched cover any manifold,
ones that only two-fold branched cover the three dimensional sphere, ones that just
two-fold branched cover a non-trivial manifold, and ones that two-fold branched cover both
the sphere and non-trivial manifolds.
When a manifold is surgery on a knot, the possible quotients via involutions generically
correspond to quotients of the knot. There can, however, be a finite number of
surgeries for which there are exceptional additional symmetries. The included proof of
this result follows the proof of Thurston's Dehn surgery theorem. The paper also
includes examples of such exceptional symmetries. Since the quotients follow the
behavior of knots, a census of the behavior for knots with less than eleven crossings is
included.
\end{abstract}

\keywords{Branched covers; Hyperbolic Geometry; Dehn Surgery; Symmetries; Exceptional Surgeries}

\ccode{Mathematics Subject Classification 2000: 57M12, 57M25, 57M50, 57R65}

\section{Introduction}\label{intro}
Every closed, orientable $3$-manifold is a $3$-fold branched
cover of the
three dimensional sphere, $S^3$. This was independently shown by Hilden, Hirsh and
Montesinos \cite{hild,hir,mont}. Many three dimensional manifolds are two-fold branched covers of  $S^3$. However, there are $3$-manifolds
that are not. For example, R. H. Fox showed that the $3$-torus is
not a $2$-fold branched cover of $S^3$ \cite{fox}, and more generally Sakuma studied which fibered $3$-manifolds $2$-fold branched cover $S^3$, \cite{sa}. In the 1978 Kirby problem list Hilden and Montesinos asked if every homology sphere was a $2$-fold branched cover of $S^3$  \cite{ki}.
Earlier Montesinos proved that a rational homology sphere that $2$-fold branched covers $S^3$ may be obtained by rational surgery on a strongly invertible link, \cite{mont3}. But this left open the question of which manifolds take this form.
Myers answered the Kirby list question negatively by showing that the union of certain pairs of knot exteriors produced irreducible homology spheres that are not $2$-fold branched covers of $S^3$ or any other $3$-manifold \cite{my}. With Thurston's demonstration of the connection between geometric structures and the topology of $3$-manifolds, it became clear that geometric decompositions were the right way to address this and many other questions, \cite{thurston-notes}.

In this paper, we explain why many
$3$-manifolds are  $2$-fold branched covers of $S^3$, show that
there are hyperbolic integral homology spheres that are not $2$-fold branched
covers of $S^3$, but are $2$-fold branched covers of some manifold, and there are hyperbolic integral homology spheres that do not $2$-fold cover any $3$-manifold. The short explanation to the first point is the Montesinos trick relating rational tangle replacements in the branch locus in the base of a $2$-fold branched cover to Dehn surgery in the cover. The short explanation to the second is to study the geometric structure, or more generally the geometric decomposition of the total space, to identify the $2$-fold symmetries. When a $3$-manifold is obtained as surgery on a hyperbolic knot, the manifolds that are $2$-fold branched covered by it can be understood via the knot, and except for a finite number of possible exceptions there is a bijection between the branched covering projections of the knot complement and the branched covering projections of the manifold obtained by surgery. Understanding these exceptional symmetries is a problem similar to understanding the exceptional fillings of hyperbolic $3$-manifolds with just one cusp \cite{go}, but much less is known about the former question. We feel that this former question would be a good entry-level problem for a graduate student in topology, so we have included significant amounts of background in this exposition. More complete background on branched covers, Dehn surgery, Seifert surfaces, knot theory, and the fundamentals of low dimensional topology may be found in the book by Rolfsen \cite{rolfsen}.

I would like to thank B. Owens for asking me if there is a rational
homology sphere that is not a $2$-fold branched cover of $S^3$. I would also like to thank D. Kotschick, M. Sakuma, and the referee for their helpful comments on an earlier draft of this paper.
\vskip-.1in
\section{First Examples}
It is natural to consider $3$-manifolds that are obtained as surgery
on a knot. For knots with fewer than $11$ crossings, it turns out that most of these $2$-fold branch cover $S^3$. We will now discuss several qualitatively different examples of $2$-fold branched covers and review the standard constructions from low-dimensional topology that we use.

\vskip-.1in
\subsection*{Example 1}
A good example to consider first is $2/3$ Dehn surgery on the
$5_2$ knot displayed on the left in figure \ref{2/3slope}.
The knot and the framing curve are clearly set-wise invariant under a $2$-fold rotation of
$S^3$ about the axis given by the green line. This rotation will extend to the deck transformation of a
$2$-fold branched covering from the surgered manifold to $S^3$. Denote the rotation by $\tau:M\to M$, then the projection to the quotient $p:M\to M/\tau$ is a $2$-fold branched cover. It is locally modeled by the
map $p:\C\times\R \to \C\times\R$ given by $p(z,t)=(z^2,t)$. Given such a map, the deck transformations
are the self-maps of the domain $\tau$ so that $p\circ\tau = p$. For a 2-fold branched cover of a $3$-manifold, there is a unique non-trivial deck transformation that is an involution with (possibly empty) $1$-dimensional fixed point set, and vice-versa.

\begin{figure}[!ht]  
\hskip20bp
\includegraphics[width=50mm]{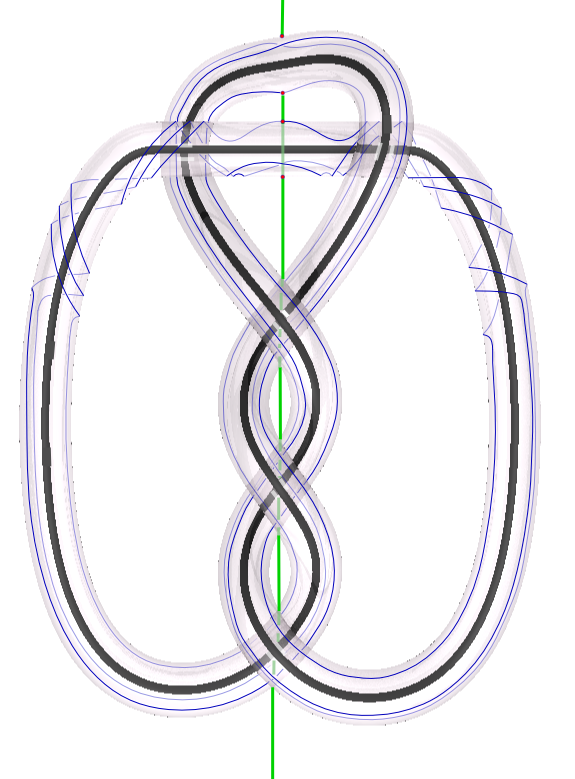}\hskip35bp
\includegraphics[width=50mm]{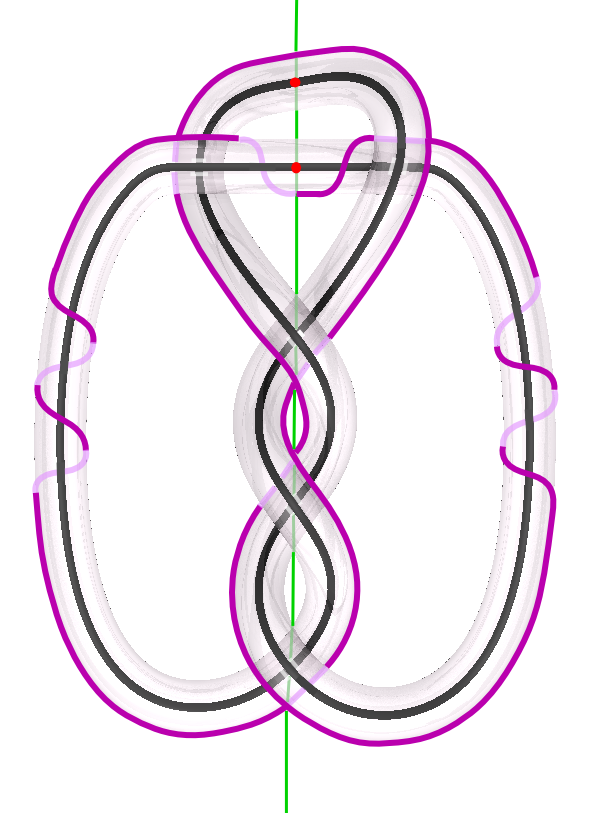}
\caption{The $2/3$ slope on the $5_2$ knot and the longitude}\label{2/3slope}
\end{figure}

Recall that Dehn surgery is the result of removing an open solid
torus and gluing in a closed solid torus:
\[
M_K(p/q):=\left(S^3-\stackrel{\circ}{N}\!(K)\right)\cup_{T^2}(D^2\times
S^1)\,.
\]
The open solid torus is a tubular neighborhood of a knot,
$\stackrel{\circ}{N}\!(K)$. The complement of an open tubular neighborhood
of a knot is known as the exterior of the knot. For oriented knots in $S^3$ generators
of the first homology of the boundary of the tubular neighborhood
can be chosen in a canonical way. The longitude, $\lambda$, is the
class of the oriented parallel to the knot that bounds in the
complement of the knot. The longitude of the $5_2$ knot is displayed on the right in figure \ref{2/3slope}. The meridian, $\mu$, is the class of $\{1\}\times \partial D^2$ when $S^1\times D^2$ is identified with the
tubular neighborhood of the knot so that its
linking number with the knot is positive.

The $2/3$-curve on the boundary of a
tubular neighborhood of the $5_2$ knot has a representative that is set-wise fixed by the
$2$-fold rotation. From the picture, it is not clear that every curve class has a representative that is set-wise fixed, and it is not clear what the quotient manifold is. We will see that every slope is represented by a set-wise fixed curve and the the quotient is just $S^3$ and
see that this is a very general phenomena.

We first verify that the original $2$-fold rotation induces an
involution on the surgered manifold. This rotation restricts to an
elliptic involution of the torus. This is the map $\tau:S^1\times S^1\to S^1\times S^1$ given
by $\tau(z,w):=(\bar z, \bar w)$ using complex coordinates. Here we identify the torus with the boundary of the exterior of the knot so that $(z,1)$ is a meridian and $(1,w)$ is a longitude.

In the surgery process we adjoin a solid torus $D^2\times S^1$ to the boundary of the exterior of the knot. Viewing $D^2\times S^1:=\{(z,w)\in\C^2 | |z|\le 1, |w|= 1\}$, the identification $j:S^1\times S^1\to D^2\times S^1$ can be specified by a matrix in $\text{SL}_2\Z$, say $A$, such that $j(\text{exp}(2\pi iv))=\text{exp}(2\pi iA^{-1}v)$ with $v=(x,y)$ and the exponential acting component wise. This is because every orientation preserving diffeomeorphism of a torus is isotopic to one of this form and changing the identification by an isotopy does not change the resulting manifold. These two facts are nicely explained in \cite{rolfsen}. The surgery coefficients are given by $p/q$ where $(p,q)=A(1,0)$.  Said another way, the identification  of $\partial(D^2\times S^1)$ and the boundary of $S^3-\stackrel{\circ}{N}\!(K)$ takes $(\partial D^2)\times \{1\}$ to the class $p\mu+q\lambda$.
Note that allowing orientation reversing maps would not produce any additional manifolds because there is an orientation reversing diffeomorphism of the torus that extends to a diffeomorphism of the solid torus.

To see that the involution extends to the surgered manifold, just define it on $D^2\times S^1$ by the same formula: $\tau(z,w):=(\bar z, \bar w)$ and notice that $j((\bar z, \bar w))=\overline{j(z,w)}$. Thus the original involution induces an involution on any manifold obtained by Dehn surgery on the knot. The fixed point locus in the knot exterior is a pair of intervals as is the fixed point set in the solid torus. Thus all of these manifolds are $2$-fold branched covers of some $3$-manifold.

\begin{figure}[!ht]  
\hskip55bp
\includegraphics[width=30mm]{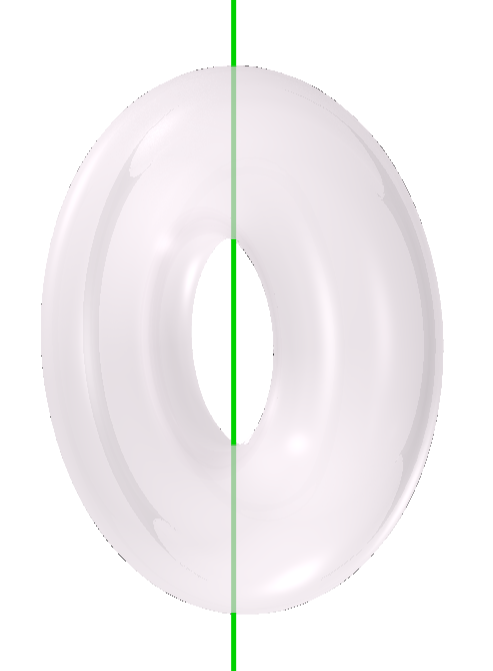}\hskip85bp
\includegraphics[width=30mm]{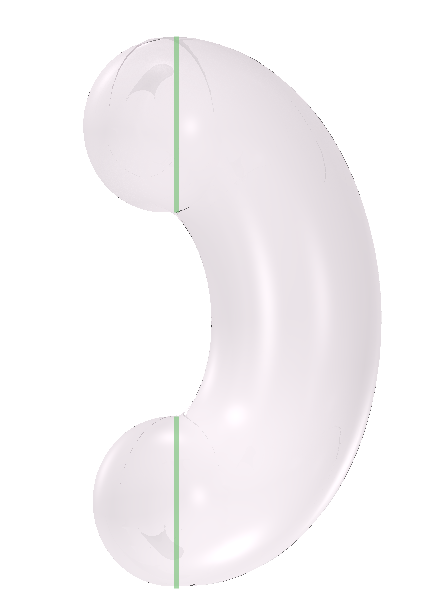}
\caption{Quotient of $D^2\times S^1$ by the involution}\label{fig2}
\end{figure}

An alternative proof that the involution extends will show that each of these manifolds is a $2$-fold branched cover of $S^3$. Figure \ref{fig2} shows that the quotient of a solid torus by the elliptic involution is a closed $3$-ball.
It follows that the quotient of Dehn surgery on a knot by an involution that acts as the elliptic involution on the boundary of the tubular neighborhood is obtained by removing a $3$-ball from the $3$-sphere (obtained as the quotient of the obvious $2$-fold involution on $S^3$ in the case of trivial $\infty:=1/0$ surgery on the knot)
and gluing in a closed $3$-ball via some homeomorphism. Since every homeomorphism of $S^2$ extends across the $3$-ball the quotient of the original manifold must be $S^3$.

\begin{figure}[!ht]  
\[\begin{array}{cc}
\includegraphics[width=30mm]{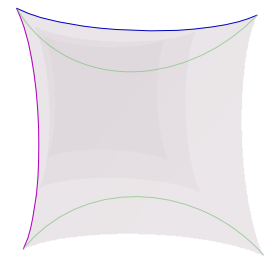} &\includegraphics[width=30mm]{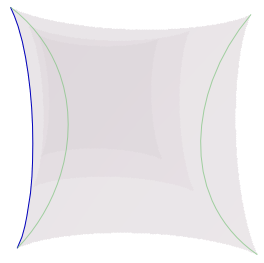}\\
\text{$\infty$ or $(1,0)$} & \text{$0$ or $S(1,0)$}\\
\includegraphics[width=30mm]{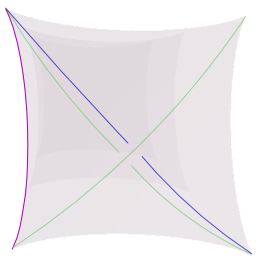} &\includegraphics[width=30mm]{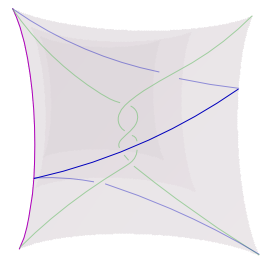}\\
\text{$1$ or $TS(1,0)$} & \text{$3$ or $T^3S(1,0)$} \\
\includegraphics[width=30mm]{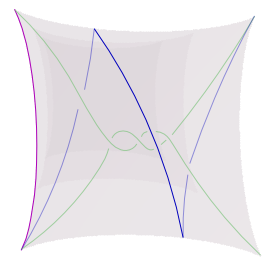} &\includegraphics[width=30mm]{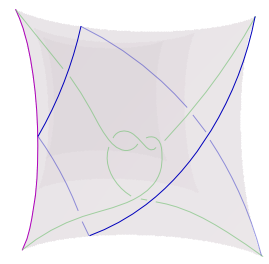}\\
\text{$-1/3$ or $ST^3S(1,0)$} & \text{$2/3$ or $TST^3S(1,0)$}
\end{array} \begin{array}{c} \\ \hskip5bp\includegraphics[width=55mm]{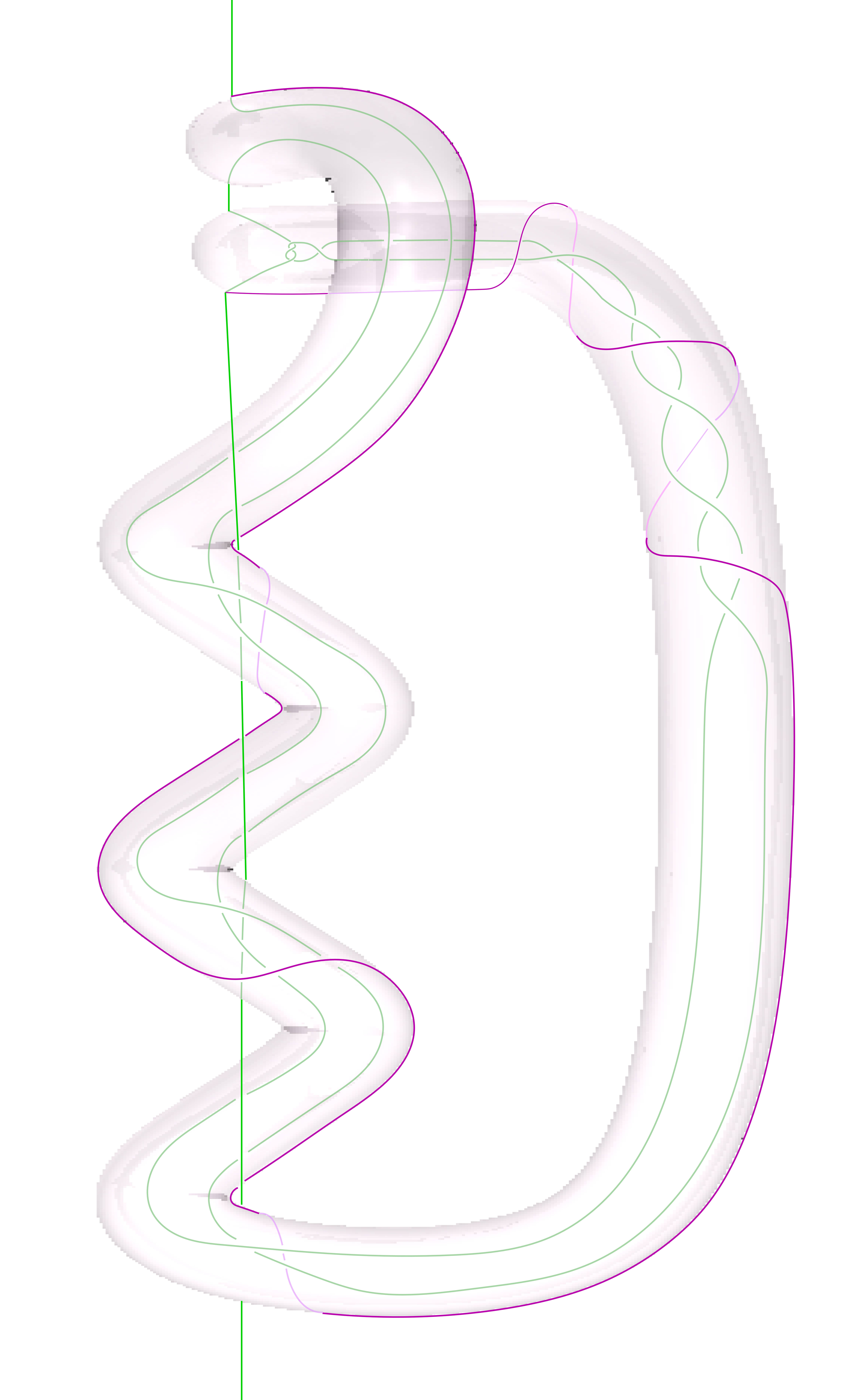} \\
\end{array}
\]
\caption{Tangling the branched set}\label{branchset}
\end{figure}

Every matrix in $\text{SL}_2\Z$ is a product of copies of $S:=\begin{bmatrix}0&-1\\1&0
\end{bmatrix}$ and \hfill\newline $T:=\begin{bmatrix}1&1\\0&1
\end{bmatrix}$. Since these both commute with $\begin{bmatrix}-1&0\\0&-1
\end{bmatrix}$, they also act on the quotient of $T^2$ by the elliptic involution. This quotient is an orbifold known as the pillow case or $2222$-orbifold. The underlying topological space is $S^2$ and the singular locus consists of four cone points of order $2$. The apparent left edge of the pillow case lifts to a longitude of the original knot. The apparent top edge of the pillow case lifts to a meridian.

The top left portion of figure \ref{branchset} displays the pillow case before any homeomorphism is applied. We labeled each pillow case in this figure by the corresponding surgery slope, and the action of the $S$ and $T$ matrices. The matrix $S$ acts as a $90$ degree rotation of the pillow case, and this clearly extends to the pillow. The matrix $T$ acts as a twist that interchanges the two corners on the right hand side. In each pillow the image of the top edge is colored blue, and the branch locus is colored   green. The apparent left edge is consistently  colored purple. If the resulting pillow is glued into the quotient of a knot exterior so that the apparent left edge maps to the image of the longitude and the apparent top edge maps to the meridian the resulting configuration will be the branch locus in the quotient manifold. Figure \ref{branchset} shows the result of gluing the $2/3$-pillow configuration into the image of the exterior of the $5_2$ knot. Adding the the blue curve to the quotient picture and taking a lift produces the invariant representative of the $2/3$-slope that was displayed in figure \ref{2/3slope}.

It should now be clear that this procedure could be followed with any surgery slope and any knot admitting an involution that restricts to the elliptic involution on the boundary of the tubular neighborhood. This procedure also produces an explicit description of the branch locus in the quotient. The rational tangle calculus and its relation to Dehn surgery is well known. It is called the Montesinos trick. A description of it applied to $1/q$ surgery may be found in \cite{fs}. This completes the alternate proof that the quotient of this type of involution is $S^3$.

\subsection*{Example 2}
\begin{figure}[!ht]  
\hskip55bp
\includegraphics[width=30mm]{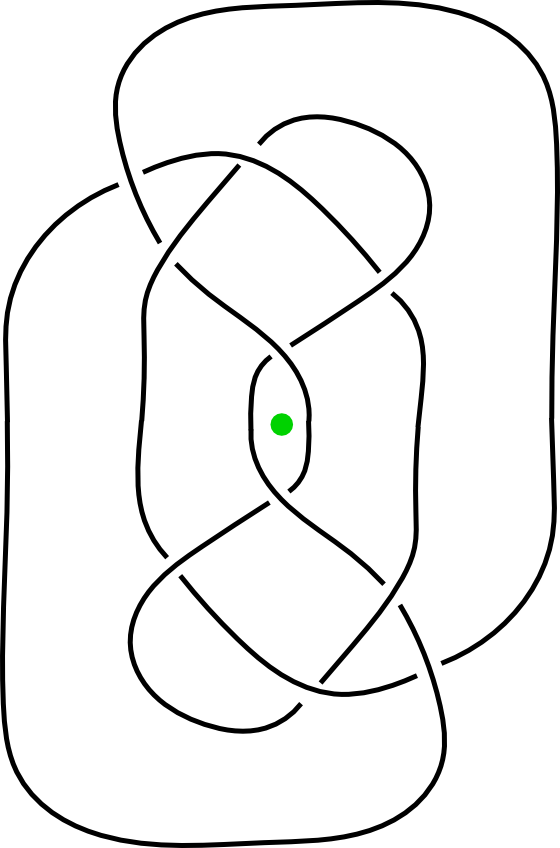}\hskip65bp
\includegraphics[width=30mm]{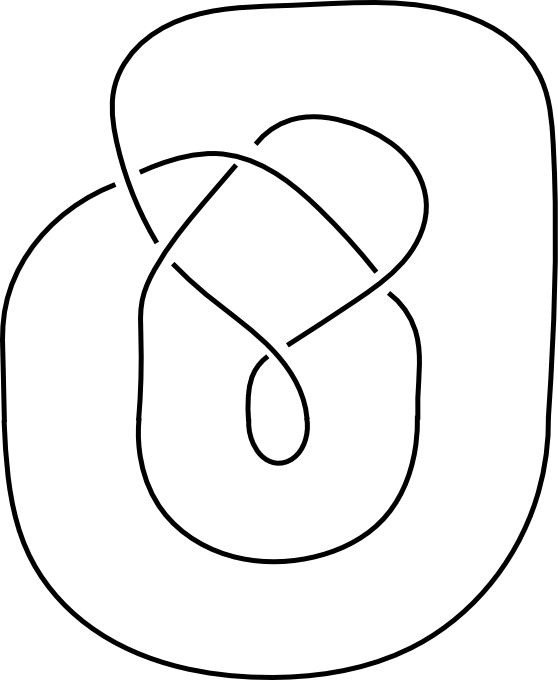}
\caption{The $10_{98}$ knot and its quotient}\label{10_98}
\end{figure}
Figure \ref{10_98} displays a projection of the $10_{98}$ knot. This projection is invariant under a $1/2$ rotation about the axis coming out of the page through the green dot. The main difference between this example and the previous example is that in this case the axis of the rotation does not intersect the knot. This means that the quotient of the pair $(S^3,K)$ is $(S^3,K^\prime)$ where $K^\prime$ is a copy of $S^1$ embedded in $S^3$. In this case it is knotted.

Once again we can verify that this rotation induces an involution on any manifold obtained by surgery on the $10_{98}$ knot. For any, but the trivial surgery, the quotient will be a manifold other than $S^3$. This follows from the Gordon Luecke theorem that only trivial surgery on a non-trivial knot produces $S^3$, \cite{GL}.

To begin, notice that the $2$-fold branched cover of a generic Seifert surface in the quotient is a Seifert surface for the original knot. This means that the $1/2$ rotation fixes a longitude of the knot set wise. The same rotation does not fix a meridian set wise. (It does fix it as a slope.) It follows that the boundary may be identified with a torus so that the rotation is given by $\tau(z,w)=(z,-w)$. For $p/q$ surgery on the knot, choose integers $r$ and $s$ so that $ps-qr=1$ and define a deck transformation on $D^2\times S^1$ by $\tau(z,w)=((-1)^rz,(-1)^pw)$. It is not difficult to check that this deck transformation is compatible with the identification of the boundary of $D^2\times S^1$ with the boundary of the tubular neighborhood of the knot.

A bit more information can be seen. When $p$ is even, $r$ must be odd and $\{0\}\times S^1$ will be an extra component in the branch locus since it is fixed set wise as one can see from the formula for $\tau$ above. Thus the branch locus will have two components, one in the exterior of the knot and one in the solid torus. When $p$ is odd, the axis in the knot exterior will be the only component of the branch locus. Since a meridian in the quotient lifts to two meridians, and a longitude in the quotient lifts to just one longitude, the surgery coefficient in the quotient will be exactly one half of the surgery coefficient on the original knot. Similar things can be said for knots with $n$-fold cyclic symmetry in which case $1/q$ surgery on the symmetric knot will $n$-fold branched cover $1/nq$ surgery on the quotient knot, \cite{fs}.

\section{Restrictions on covering projections}
We have seen two fairly general constructions of $2$-fold branched covers. In this short section we will explain how we can often identify all $2$-fold branched quotients of a manifold. The key idea is that a $2$-fold branched cover is determined by its unique non-trivial deck transformation and that in the hyperbolic case it suffices to consider isometries when looking for such deck transformations. State-of-the-art $3$-manifold theory is sufficiently advanced that this can be easily accomplished by quoting from the many deep theorems in the area. These include the orbifold theorem, the geometrization theorem and Thurston's hyperbolization theorem.

Intuitively, an orbifold is a space that is locally modeled on the quotient of Euclidean space by a finite group. For example, consider the quotient $\R^2$ by the group generated by a $1/3$rd rotation about the origin. Topologically, this is just a copy of $\R^2$. However, one should not treat all points the same. Every point other than the origin is moved by the non-trivial rotations. The origin is fixed by all rotations, so it makes sense to label the image of the origin with this stabilizer group $\Z_3$.   One needs to keep track of the stabilizer groups in the formal definition of an orbifold.

Another distinction between the orbifold and the underlying topological space arises when one considers compatible geometric structures. If one puts the standard metric on $\R^2$, the natural quotient metric will be a cone, so that the circumference of the circle of radius $1$ centered on the cone point is $2\pi/3$. Intuitively, a geometric orbifold will be locally equivalent to the quotient of a nice type of Riemannian manifold, known as a locally homogeneous space, by a finite group of isometries. An introduction to geometric orbifolds may be found in \cite{chk}.

Other $2$-dimensional examples, include the quotient of $S^2$ by a reflection through the equator, and the quotient of $S^2$ by the symmetry group of a tetrahedron. This last example should be viewed as $(\pi/2, \pi/3, \pi/3)$ spherical triangle, with one vertex labeled by the dihedral group of order four, two vertices labeled by the dihedral group of order $6$, and the edges labeled by $\Z_2$. The quotient of $S^2$ by the orientation preserving symmetries of a tetrahedron, is topologically a sphere, but it has a cone point labeled by $\Z_2$, and two cone points labeled by $\Z_3$. In these two-dimensional examples the underlying topological space is a manifold with boundary.

The underlying topological space need not be a manifold with boundary in general. For an example, the quotient of $\R^3/\{\pm 1\}$ has a singular point with non-simply connected link. Thus it is not a manifold.

To define the correct notion of a map between orbifolds is a bit tricky.
The modern definition of an orbifold is explained in \cite{adem}.
By this definition, an orbifold is essentially a special type of groupoid, and a groupoid is just a category such that every morphism has an inverse. Given a finite group acting on a manifold, one defines the objects of the groupoid to be the points in the manifold, and the arrows to be pairs $(x,g)$ consisting of a point in the manifold and an element of the group. To any object, i.e. point $x$, one may associate all arrows from the object to itself. This is just a fancy way to add groups to the points, namely the group of self morphisms of each point. In the case of a group acting on a manifold, the group associated to a point is just the stabilizer of the point. The topological space underlying an orbifold is just the set of equivalence classes of objects under the relation that identifies objects connected by an arrow. In the case of a group action, it is the quotient space. These are called {\it good} orbifolds. This definition allows one to describe examples that are not just the quotient of a manifold by a finite group (such orbifolds are called {\it bad}), as well as define maps between orbifolds.

In the case of a $2$-fold branched cover $p:M\to N$ we let the objects equal the points in $M$ and let the arrows be the pairs $(x,\tau)$ where $x\in M$ and $\tau$ is a deck transformation, i.e. a self homeomorphism of $M$ such that $p\circ \tau=p$.

The orbifold geometrization theorem, \cite{orbgeo,BLP} is stated below for the reader's convenience.
\vfill\newpage
\begin{theorem}[Thurston; Cooper, Hodgson, Kerkhoff; Boileau, Leeb, Porti]
If $\mathcal{N}$ is a good, compact, connected, orientable, irreducible, atoroidal, $3$-orbifold with non-empty singular locus, then $\mathcal{N}$ is geometric.
\end{theorem}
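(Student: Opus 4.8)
The plan is to follow Thurston's cone-manifold deformation strategy, as completed by Cooper--Hodgson--Kerkhoff and Boileau--Leeb--Porti, rather than to attempt a Ricci-flow argument; the orbifold theorem predates Perelman's work, and the natural proof exploits the one-dimensional singular locus directly. Let $\Sigma$ denote the singular locus of $\mathcal{N}$, whose edges carry cone angles of the form $2\pi/n$. The idea is to realize the orbifold structure as the endpoint of a one-parameter family: consider hyperbolic cone structures on the underlying space with singular locus $\Sigma$ and a variable cone angle $\alpha$ along the edges, and try to deform $\alpha$ continuously up to the orbifold value. First I would set up the deformation framework. By the Hodgson--Kerkhoff local rigidity theorem, a hyperbolic cone structure with all cone angles less than $2\pi$ is locally rigid, so the space of such structures is locally parametrized by the cone angles. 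This yields the openness half of a continuity argument: once a hyperbolic cone structure exists at a given angle, it persists and deforms for nearby angles.

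Next I would organize the continuity argument. Starting from very small cone angles, where a hyperbolic cone structure can be produced by a Dehn-filling type deformation on the complement of $\Sigma$, let $\alpha_\infty$ be the supremum of angles $\alpha \le 2\pi/n$ for which the hyperbolic cone structure survives in the family. Openness shows the attainable set of angles is open, so the whole difficulty migrates into understanding the limit as $\alpha \to \alpha_\infty$. The goal is a dichotomy: either $\alpha_\infty = 2\pi/n$ is realized, giving a hyperbolic structure on $\mathcal{N}$ and hence geometricity, or the failure to reach it produces a geometric structure of a different type. To analyze the limit I would study the Gromov--Hausdorff limit of the cone manifolds, using that the curvature is controlled by comparison away from $\Sigma$ since the structures are hyperbolic there.

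The decisive case analysis splits on volume and injectivity radius. In the non-collapsing case (volume bounded below, no degenerating short geodesics) one extracts a limiting hyperbolic cone structure at $\alpha_\infty$; local rigidity then lets one push past $\alpha_\infty$ unless $\alpha_\infty = 2\pi/n$ already, so the family reaches the orbifold and $\mathcal{N}$ is hyperbolic. If instead a short geodesic or an incompressible sub-object appears in the limit, one uses it to locate an essential spherical, toric, or turnover $2$-suborbifold; the irreducibility and atoroidality hypotheses exclude all but the degenerate configurations, which in turn force $\mathcal{N}$ to be spherical or Seifert fibered, and again geometric.

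The hard part will be the collapsing case, where the cone manifolds undergo Gromov--Hausdorff collapse with locally bounded curvature. Here one must invoke the structure theory of collapsing (the Cheeger--Fukaya--Gromov theory of F-structures, adapted to the cone-manifold setting by Boileau--Leeb--Porti) to show that collapse yields a Seifert fibration, or a Sol or Euclidean structure, compatible with the atoroidal hypothesis. Ruling out pathological thin parts, and identifying the resulting fibered structure while keeping the singular locus $\Sigma$ saturated, is the genuine technical heart of the theorem and is precisely where the three groups of authors contributed the essential analysis. Accordingly I would dispatch local rigidity and openness quickly and reserve the bulk of the argument for this collapsing analysis and for the extraction of essential $2$-suborbifolds from degenerating sequences.
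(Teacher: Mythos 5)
The paper does not prove this statement at all: it is quoted as background (``The orbifold geometrization theorem, \cite{orbgeo,BLP} is stated below for the reader's convenience''), attributed to Thurston, Cooper--Hodgson--Kerkhoff, and Boileau--Leeb--Porti, and then simply applied in the proof of the paper's own Theorem~\ref{main}. So there is no proof in the paper to compare yours against; the only fair comparison is with the cited literature, and your outline does correctly reproduce the architecture of those proofs: deform hyperbolic cone structures along the singular locus $\Sigma$, obtain openness from Hodgson--Kerkhoff local rigidity, and analyze the limiting cone angle via a non-collapsing/collapsing dichotomy, with the collapsing case handled by Cheeger--Fukaya--Gromov-type structure theory as adapted by Boileau--Leeb--Porti.

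Judged as a proof rather than a roadmap, however, your proposal has two concrete gaps. First, your starting point --- the existence of hyperbolic cone structures at small cone angles, produced ``by a Dehn-filling type deformation on the complement of $\Sigma$'' --- presupposes that $\mathcal{N}$ minus its singular locus admits a complete finite-volume hyperbolic structure. That already requires Thurston's hyperbolization theorem for Haken manifolds, and one must separately treat the cases where this complement is Seifert fibered or contains essential tori; those cases are not vacuous (they produce the spherical, Euclidean, Nil, Sol, and Seifert-fibered conclusions of the theorem), and skipping them leaves the theorem unproved for exactly such orbifolds. Second, the closedness half of the continuity argument --- the stability and compactness theorems for cone manifolds, the extraction of geometric limits, and the entire collapsing analysis --- is precisely where you yourself locate the ``genuine technical heart,'' and you defer it rather than carry it out. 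Naming the hard steps and the authors who did them is not the same as proving them; as written, your proposal establishes only the openness half of the argument and an accurate table of contents for the rest.
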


We will also use Thurston's hyperbolization theorem and the full geometrization theorem to establish the following result that provides effective restrictions on the possible quotients of $2$-fold covering projections.

\begin{theorem}\label{main}
If $M^3$ is a compact, connected, orientable, hyperbolic $3$-manifold and $p:M^3\to N^3$ is a $2$-fold branched cover, then there is a hyperbolic metric on $M$ for which every deck transformation is an isometry.
\end{theorem}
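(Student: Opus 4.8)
The plan is to geometrize the quotient orbifold and then pull the resulting geometry back to the cover, which automatically turns the deck group into a group of isometries. Let $\tau\colon M\to M$ denote the unique non-trivial deck transformation; it is an involution whose fixed-point set is the (possibly empty) branch locus, and near that set it is locally the rotation $(z,t)\mapsto(-z,t)$, so it preserves orientation. I would form the quotient orbifold $\mathcal{N}=M/\langle\tau\rangle$, whose underlying space is $N$ and whose singular locus (when the branch locus is non-empty) is the image of $\mathrm{Fix}(\tau)$, a link labelled by $\Z_2$. As a global quotient of a manifold by a finite group, $\mathcal{N}$ is automatically good, and it is compact, connected and orientable. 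What remains is to verify the remaining hypotheses of the orbifold geometrization theorem, identify the geometry as hyperbolic, and lift.

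I would first dispose of the degenerate case $\mathrm{Fix}(\tau)=\emptyset$, in which $p$ is an ordinary covering and $N$ is a closed manifold. The hyperbolicity of $M$ makes it aspherical, irreducible, atoroidal and $\pi_1$-infinite, and these properties pass to the finitely covered $N$; the full geometrization theorem then gives $N$ a geometric structure, necessarily modelled on $\H^3$ since a closed $3$-manifold with a hyperbolic finite cover admits none of the other seven geometries. (If $\tau$ reverses orientation one first passes to the orientation double cover of $N$.) Pulling this structure back to $M$ makes $\tau$ an isometry.

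For the main case $\mathrm{Fix}(\tau)\neq\emptyset$ I would check that $\mathcal{N}$ is irreducible and atoroidal as an orbifold. Conceptually both are inherited from $M$: the orbifold universal cover of $\mathcal{N}$ is the universal cover $\R^3$ of $M$, so $\mathcal{N}$ is aspherical, and $M$ contains no essential spheres or tori. Concretely, an essential spherical $2$-suborbifold would lift to an essential sphere in the irreducible manifold $M$, while an essential Euclidean $2$-suborbifold---a torus or a pillowcase $S^2(2,2,2,2)$, the latter being the quotient of a torus by the elliptic involution as recalled earlier---would lift to an essential torus in the atoroidal manifold $M$, each a contradiction. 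I expect this step to be the main obstacle: making the lifting rigorous requires the equivariant versions of the sphere, loop and torus theorems, to guarantee that essential suborbifolds really do produce essential surfaces upstairs rather than compressible or parallel ones.

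With these hypotheses established, the orbifold geometrization theorem endows $\mathcal{N}$ with a geometric structure. Since $M$ is a degree-$2$ manifold cover, the structure lifts to one with the same model geometry on $M$; as $M$ already supports a hyperbolic structure and a closed $3$-manifold admits at most one of the eight geometries, that model is $\H^3$. Thus $\mathcal{N}=\H^3/\Gamma$ with $\Gamma=\pi_1^{\mathrm{orb}}(\mathcal{N})$ and $\pi_1(M)\leq\Gamma$ of index two, and pulling the structure back along $M\to\mathcal{N}$ yields a hyperbolic metric on $M$ for which the covering involution---$\tau$ \emph{itself}, not merely a diffeomorphism isotopic to it---is an isometry. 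Since the deck group is $\{\mathrm{id},\tau\}$, every deck transformation is then an isometry, as required; by Mostow rigidity this metric is in fact isometric to the original one, though the statement only asks for the existence of some such hyperbolic metric.
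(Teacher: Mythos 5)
Your overall route is the same as the paper's: form the quotient orbifold $\mathcal{N}$, check it is compact, connected, orientable, irreducible and atoroidal, apply the orbifold geometrization theorem when the branch locus is non-empty (and ordinary geometrization when it is empty), lift the geometric structure to $M$, and identify the model geometry as hyperbolic (you do this via uniqueness of the model geometry on a closed $3$-manifold, the paper via a Gromov-norm argument; both work). The genuine problem is your free, orientation-reversing sub-case. When $\mathrm{Fix}(\tau)=\emptyset$ and $\tau$ reverses orientation, $N$ is non-orientable, and your parenthetical fix --- ``first pass to the orientation double cover of $N$'' --- is circular: the orientation double cover of $N$ \emph{is} $M$ itself. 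Indeed, the orientation cover corresponds to $\ker\bigl(w_1\colon\pi_1(N)\to\Z_2\bigr)$, which has index two, and $p_*\pi_1(M)$ is an index-two subgroup contained in that kernel, so the two subgroups coincide. Geometrizing that cover is not the issue --- $M$ is hyperbolic by hypothesis; the issue is descending the structure to $N$, i.e.\ producing a hyperbolic metric on $M$ invariant under the actual involution $\tau$, which is exactly the statement being proved. Nor can you apply Perelman's theorem to $N$ directly, since as you invoke it (and as the paper notes) it is a statement about orientable manifolds.

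The paper closes this sub-case by a different means: the closed non-orientable $N$ inherits irreducibility, atoroidality, and infinite fundamental group from $M$, and such an $N$ is Haken (cited to Hempel's book), so Thurston's original hyperbolization theorem for Haken manifolds geometrizes $N$; pulling that structure back to $M$ makes $\tau$ an isometry. With that substitution (or some other genuine argument for non-orientable quotients, such as an equivariant Ricci-flow argument) your proof goes through. The loose end you flagged yourself --- that essential spherical or Euclidean suborbifolds lift to essential spheres or tori upstairs, which strictly needs equivariant sphere/torus theorems --- is at the same level of rigor as the paper's own treatment and is not a gap relative to it.
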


Note that up to isometry, any hyperbolic metric on such a manifold is unique, so there is an isometry between the new metric and the original metric.

\begin{proof}
We first notice that $N$ is the underlying space of an orbifold, $\mathcal{N}$.  We first consider the case when the branch set is non-empty and apply the orbifold theorem.
Clearly,  $\mathcal{N}$ is compact and connected. Let $\tau$ be the non-trivial deck transformation, and let $g$ be the metric on $M$. The averaged metric $\frac12(g+\tau^*g)$ is certainly $\tau$-invariant (but it might no longer be hyperbolic). If $\mathcal{N}$ was non-orientable, $d\tau$ would have to be an orientation reversing isometry
(with the averaged metric) of the tangent space of any fixed point, so $-I$ or  reflection in a plane. In either case, $N$ would fail to be a manifold near the orbit of the fixed point.

Recall, a bad orbifold is one that is not covered by a manifold. The quotient $\mathcal{N}$ cannot contain a bad orbifold because it is covered by a manifold. A spherical $2$-orbifold in the quotient would lift to a sphere in $M$. Since $M$ is hyperbolic, it is irreducible so the lift will bound a $3$-disk in $M$. The quotient of this $3$-disk will be a (possibly immersed) discal orbifold in $\mathcal{N}$ bounding the given spherical orbifold. It follows that $\mathcal{N}$ is irreducible. A similar argument (lift, find a compressing disc and take the quotient) will show that $\mathcal{N}$ is atoroidal.

Applying the orbifold theorem tells us that there is a geometric (locally homogeneous) metric on $\mathcal{N}$, say $h$. This lifts to a geometric metric $\tilde h$ on $M$ and this must be hyperbolic as the Gromov norm of a manifold with any of the other seven geometric structures must be zero and the Gromov norm of a hyperbolic manifold is proportional to its volume, \cite{ben, orbgeo}.

If the branch set is empty, one must still prove that the quotient is hyperbolic. If the quotient is orientable, we can apply Perelman's proof of the geometrization theorem to obtain the same result, \cite{per1,per2,per3}. This theorem is discussed
in \cite{lott,MT,morgan,bes}.  Some people may worry about the case when the quotient $N$ is non-orientable. The geometrization theorem is certainly true in this case because such an $N$ will be Haken, so the original work of Thurston applies, \cite{thurston}. See also the book \cite{kap}. (It is certainly appropriate to quote Thurston here as he is the one who originally pointed out the power of applying geometric techniques to the study of $3$-manifolds.)

In the orientable and non-orientable case the fact that the quotient is irreducible, atoroidal, with infinite fundamental group follows because the $2$-fold cover has the same properties. This suffices for the orientable case and the application of Perleman's theorem. To use Thurston's theorem in the non-orientable case, one must know that the quotient is Haken.
The fact that $N$ is Haken when it is non-orientable may be found in Hempel's book, \cite{H}.
\end{proof}

The previous theorem states that there is a hyperbolic metric on the manifold $M$ for which the deck transformations are isometires. One may worry that there are different hyperbolic metrics that need to be considered. However, it is known that there is a unique (up to isometry) hyperbolic metric on any closed hyperbolic $3$-manifold. This is Mostow's rigidity theorem, \cite{mostow}.

Given the previous theorem it makes sense to see what is known about the isometries of hyperbolic $3$-manifolds. Once again, the known results far surpass what is needed for our present problem. Kojima showed that every finite group could be realized as the group of isometries of a hyperbolic $3$-manifold \cite{K}. A corollary of this is that there is a hyperbolic $3$-manifold with trivial isometry group. Such a manifold
can not be a $2$-fold branched cover of any $3$-manifold. The construction does not
easily say anything about the topology of these manifolds.

One would like to say that any smooth automorphism of a manifold induces an isomorphism of the fundamental group. The problem with this is that such an automorphism may very well move the base point. If the manifold is connected one may connect the base point to its image with a path and thereby get an isomorphism of the fundamental group. The problem with this is that changing the choice of path will change the isomorphism by conjugation. The group of isomorphisms modulo conjugation (inner automorphism) is known as the outer automorphism group. It is denoted by $\text{Out}(\pi_1(M))$. Since homotopic maps induce the same map at the level of the fundamental group, there is a well defined map
\[
\text{Diff}(M)/\text{Diff}(M)_0 \to \text{Out}(\pi_1(M))\,.
\]
Here $\text{Diff}(M)$ is the group of all diffeomorphisms of $M$, and $\text{Diff}(M)_0$ denotes the path component of the identity. Of course any isometry is also a diffeomorphism, so one may map the isometries to the diffeomorphisms and the mapping class group $\text{Diff}(M)/\text{Diff}(M)_0$. It is a consequence of Mostow's rigidity theorem that the maps relating the three groups
$\text{Diff}(M)/\text{Diff}(M)_0$, $\text{Out}(\pi_1(M))$, and $\text{Isom}(M)$ are all isomorphisms for finite volume hyperbolic $3$-manifolds, \cite{ben,mostow}.

We remark that the following deep theorem due to D. Gabai gives even more information about the diffeomerphisms of a hyperbolic $3$-manifold.
\begin{theorem}[Gabai]
If ${M}$ is a closed, hyperbolic $3$-manifold, then the inclusion of the isometry group $\text{Isom}(M)$ into the diffeomorphism group $\text{Diff}(M)$ is a homotopy equivalence.
\end{theorem}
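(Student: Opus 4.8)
The plan is to separate the statement into its $\pi_0$ content and its higher homotopy content, dispatching the former with tools already assembled in this section and reserving the real work for the latter. Since $\text{Isom}(M)$ is a finite group for a closed hyperbolic $3$-manifold, each of its path components is a single point; thus the inclusion $\text{Isom}(M)\hookrightarrow\text{Diff}(M)$ is a homotopy equivalence if and only if (i) it induces a bijection on $\pi_0$ and (ii) the identity component $\text{Diff}(M)_0$ is contractible. Claim (i) is immediate from the material above: it was recorded that for finite-volume hyperbolic $3$-manifolds the natural maps relating $\text{Diff}(M)/\text{Diff}(M)_0$, $\text{Out}(\pi_1(M))$, and $\text{Isom}(M)$ are all isomorphisms, and this is precisely the assertion that $\text{Isom}(M)$ meets every component of $\text{Diff}(M)$ exactly once. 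So the entire difficulty is concentrated in (ii): showing that $\text{Diff}(M)_0$ is contractible, the \emph{Smale conjecture} for hyperbolic $3$-manifolds.

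To prove (ii) I would show $\pi_k(\text{Diff}(M)_0,\mathrm{id})=0$ for every $k\ge 1$ by contracting an arbitrary smooth family $\varphi\colon S^k\to\text{Diff}(M)_0$ based at the identity. The organizing principle is to use the hyperbolic metric $g$, unique by Mostow rigidity, as a rigid scaffold against which every member of the family can be canonically straightened. Passing to the universal cover $\widetilde M=\mathbb{H}^3$, each $\varphi_s$ lifts, coherently in $s$ and pinned down by the basepoint condition, to a $\pi_1(M)$-equivariant diffeomorphism $\widetilde\varphi_s\colon\mathbb{H}^3\to\mathbb{H}^3$. At the purely homotopy-theoretic level the family is trivial to contract: $\mathbb{H}^3$ is contractible and uniquely geodesic, the deck action is free, properly discontinuous, and cocompact, so the space of $\pi_1(M)$-equivariant self-maps of $\mathbb{H}^3$ (equivalently, sections of a bundle over $M$ with contractible fiber) is contractible, and the geodesic straight-line homotopy carrying $\widetilde\varphi_s(x)$ to $x$ deforms the whole family equivariantly to the identity. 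The catch is that this straightening produces equivariant \emph{maps}, not equivariant \emph{diffeomorphisms}: embeddedness and transversality can fail, so the straightened family need not descend to a path inside $\text{Diff}(M)_0$.

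The hard part, and the entire content of Gabai's theorem, is upgrading this soft homotopy-level contraction to one performed inside the diffeomorphism group, where one must import genuine three-dimensional geometry rather than homotopy theory. The strategy I would follow is Gabai's: organize the family around canonically chosen geodesics of $(M,g)$ and track a controlled singular set, using the geometric insulator/shrinkwrapping technology to guarantee that the straightening proceeds through embeddings, so that at every parameter value one retains an honest diffeomorphism. Concretely, for a fixed closed geodesic $\gamma$ one studies the family of images $\varphi_s(\gamma)$, shows they can be isotoped back to $\gamma$ coherently in $s$, then propagates the control from a tubular neighborhood of $\gamma$ outward to all of $M$, inductively reducing the parameter complexity. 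I expect the decisive obstacle to be exactly this embeddedness control: keeping the metric-driven deformation within $\text{Diff}(M)_0$ as it runs. It is here that all the low-dimensional subtleties reside, as witnessed both by the failure of the analogous statement for general $3$-manifolds and by the delicacy of even the $S^3$ case settled by Hatcher.
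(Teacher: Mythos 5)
The paper does not prove this theorem at all: it is stated as a remark and attributed to Gabai, whose proof (the Smale conjecture for hyperbolic $3$-manifolds, via insulator families) occupies an entire research paper. So there is no internal proof to compare against, and your proposal must stand or fall on its own. Its top-level structure is sound: since $\text{Isom}(M)$ is finite for closed hyperbolic $M$, the inclusion is a homotopy equivalence if and only if it is a bijection on $\pi_0$ and $\text{Diff}(M)_0$ is contractible, and the $\pi_0$ statement does follow from the Mostow-rigidity isomorphisms $\text{Diff}(M)/\text{Diff}(M)_0 \cong \text{Out}(\pi_1(M)) \cong \text{Isom}(M)$ recorded earlier in the paper. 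Your identification of the central difficulty — that geodesic straightening of the lifted family $\widetilde\varphi_s$ produces equivariant maps rather than diffeomorphisms — is also exactly right.

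The genuine gap is that step (ii) is never proved; it is only narrated. Everything after ``the strategy I would follow is Gabai's'' is a description of a program, not an argument: you do not show that the images $\varphi_s(\gamma)$ of a closed geodesic can be coherently isotoped back to $\gamma$ over the whole parameter sphere, you do not show how control near $\gamma$ propagates to all of $M$, and you do not explain what ``inductively reducing the parameter complexity'' means or why the induction terminates. These are precisely the points where Gabai's insulator machinery does all the work, and invoking it by name is circular if the goal is to prove his theorem. (A smaller factual slip: ``shrinkwrapping'' is the Calegari--Gabai technique from the tameness theorem; the Smale-conjecture proof rests on insulator families, developed in Gabai's earlier work on homotopy hyperbolic $3$-manifolds.) As it stands, your proposal correctly reduces the theorem to the Smale conjecture for hyperbolic $3$-manifolds and then cites the conjecture's resolution in outline form, which is legitimate as exposition — indeed it is essentially what the paper itself does by quoting Gabai — but it is not a proof.
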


Hodgson and Weeks developed an algorithm that can compute the isometry groups of many hyperbolic $3$-manifolds, \cite{HW}. Basically, the isometries of the complement of any hyperbolic link, i.e. one with complement admitting a complete, finite volume hyperbolic metric, in a closed, hyperbolic $3$-manifold that take meridians to meridians induce isometries of the closed $3$-manifold. This gives a lower bound on the size of the isometry group. The total number of isometries of a closed, hyperbolic $3$-manifold is bounded above by the product of the order of the group of isometries fixing a given closed geodesic and the number of geodesics having the same invariant known as complex length. This algorithm is implemented in the SnapPy program.

One should not worry about the level of rigor of the computer calculations. Symmetries of knots can be analyzed with topological tools. The extensive theory of characteristic splittings of knots as developed by Bonahon and Siebenmann provides a powerful framework to address such questions \cite{BS}. Kodama and Sakuma used topological arguments to analyze the symmetry groups of all prime knots with fewer than eleven crossings, \cite{KS}.

As a first application of the fact that we can restrict our attention to isometries and it is possible to compute isometires, we computed the symmetry group of $S^3_{10_{98}}(1)$ to be $\mathbb{Z}_2$. Thus there is only one element of order two, and it is the involution generated by a $1/2$ rotation about the axis in figure \ref{10_98}. Thus it admits a double branched covering projection to $S^3_{3_{1}}(1/2)$, but does not double branch cover any other $3$-manifold (including $S^3$).

In addition to asking about the existence of $2$-fold branched cover quotients of a $3$-manifold, one may ask questions about the complexity of such quotients. One way to measure the complexity of such a quotient would be via the number of components of the branch set. The following result is
in Turaev's classification of oriented Montesinos links, \cite{turaev}  because the number of quasiorientations on the branch set is one less than the
$\text{\rm number of components}$, and the number of spin structures is
$\text{\rm dim}_{\Z_2}(H^2(M;\Z_2))$.

\begin{theorem}
If $\tau$ is an involution on a closed, connected, oriented $3$-manifold, $M$, and it acts trivially on the homology, then the
number of components of the fixed point set of $\tau$ satisfies the following bound:
\[
\text{\rm Number of components} = 1+\text{\rm dim}_{\Z_2}(H^2(M;\Z_2))\,.
\]
\end{theorem}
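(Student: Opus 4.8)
The plan is to reduce the statement to a purely mod-$2$ homological identity and then to establish that identity by Smith theory. First I would record the local structure: since $\tau$ is an orientation-preserving involution whose fixed set has codimension two, $\mathrm{Fix}(\tau)$ is a disjoint union of, say, $k$ circles, and near each circle $\tau$ is rotation by $\pi$ in the normal $\C$-direction. Thus the number of components to be counted is exactly $k$, and over $\Z_2$ the total Betti number is $\dim_{\Z_2} H_*(\mathrm{Fix}(\tau);\Z_2) = 2k$. On the other side, because $M$ is closed, connected and orientable, Poincar\'e duality over the field $\Z_2$ gives $\dim H_1(M;\Z_2)=\dim H_2(M;\Z_2)=\dim H^2(M;\Z_2)$, together with $\dim H_0=\dim H_3=1$. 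Writing $b:=\dim_{\Z_2}H^2(M;\Z_2)$, the total $\Z_2$-Betti number of $M$ is $2+2b$. Hence the assertion $k=1+b$ is \emph{equivalent} to
\[
\dim_{\Z_2} H_*(\mathrm{Fix}(\tau);\Z_2)=\dim_{\Z_2} H_*(M;\Z_2),
\]
i.e.\ to the statement that Smith's inequality for the involution $\tau$ is an equality.

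The main work is therefore to prove this Smith equality, and this is where the hypothesis that $\tau$ acts trivially on homology must be used. I would pass to the Borel construction $M_{h\Z_2}=M\times_{\Z_2}E\Z_2$ and the fibration $M\to M_{h\Z_2}\to B\Z_2=\R P^\infty$, and study its Serre spectral sequence with $\Z_2$ coefficients. Triviality of $\tau$ on $H^*(M;\Z_2)$ makes the local system constant, so $E_2=\Z_2[u]\otimes H^*(M;\Z_2)$, a free module over $H^*(B\Z_2;\Z_2)=\Z_2[u]$. By the Borel localization theorem the localized equivariant cohomology is $H^*(\mathrm{Fix}(\tau);\Z_2)[u,u^{-1}]$, whose $\Z_2[u,u^{-1}]$-rank is $\dim_{\Z_2}H^*(\mathrm{Fix}(\tau);\Z_2)$; this equals $\dim_{\Z_2}H^*(M;\Z_2)$ precisely when the spectral sequence degenerates at $E_2$ (equivalently, when $M$ is totally non-homologous to zero in $M_{h\Z_2}$). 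So the crux reduces to degeneration of this spectral sequence.

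\textbf{The degeneration step is the main obstacle.} Triviality of the action alone is not sufficient: the free antipodal involution on $S^2$ acts trivially on $H^*(S^2;\Z_2)$ yet has empty fixed set and a non-degenerate spectral sequence, so further input is genuinely needed. In our situation the fixed set is non-empty (the cover is genuinely branched) and $M$ satisfies Poincar\'e duality, and I would exploit both. The unit $1\in H^0$ is a permanent cycle, and I would argue that the classes of $H^1(M;\Z_2)$ are permanent cycles as well: a differential out of $H^1$ would land in $u$-multiples of $H^0$, and restricting the Borel construction to a fixed circle, where the action is a product and the spectral sequence manifestly collapses, should obstruct it. Multiplicativity of the differentials together with Poincar\'e duality, which pairs $H^1$ with $H^2$ and $H^0$ with $H^3$, would then force all differentials to vanish. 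Proving this cleanly, rather than merely the Smith inequality, is the delicate point.

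Finally, I would cross-check the identity against the spin-structure/quasi-orientation dictionary implicit in the statement: the quasi-orientations of the $k$-component branch link form a torsor over a $\Z_2$-vector space of dimension $k-1$, while the spin structures of $M$ form a torsor over $H^1(M;\Z_2)$, of dimension $\dim H^2(M;\Z_2)$, so a natural correspondence between these data reproduces $k-1=\dim H^2(M;\Z_2)$. As an alternative to the Smith-theoretic argument one can compute directly on the quotient $N=M/\tau$: decompose $N=N_0\cup\nu\bar L$ along the branch link, analyze the connected double cover $M_0\to N_0$ of complements by its Gysin sequence with $\Z_2$ coefficients, and reassemble $H_*(M;\Z_2)$ by Mayer--Vietoris, using triviality of $\tau$ on homology to pin down the gluing maps. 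I expect both routes to meet at the same equality.
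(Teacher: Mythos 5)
Your reduction of the statement to the Smith equality $\dim_{\Z_2}H_*(\mathrm{Fix}(\tau);\Z_2)=\dim_{\Z_2}H_*(M;\Z_2)$ is correct, and so is your observation that non-emptiness of the fixed set must be added as a hypothesis (the free deck involution of $S^3\to \R P^3$ satisfies the stated hypotheses and violates the formula). But the step you yourself flag as ``the delicate point'' --- degeneration of the Borel spectral sequence at $E_2$ --- is the entire content of the theorem, and the tools you propose for it demonstrably cannot close it. Restriction to a fixed circle does kill differentials out of $H^1$, but it gives no control on differentials out of $H^2$, and neither does Leibniz plus duality. Concretely, take $M=S^1\times S^2$ with $\tau$ the product of a reflection of $S^1$ and a reflection of $S^2$: the fixed set is the two circles $\{\pm 1\}\times E$ ($E$ the equator), the action is trivial on $H^*(M;\Z_2)$, and both the generator $\theta\in H^1$ and the generator $\sigma\in H^2$ restrict to zero on every fixed circle ($\sigma$ for degree reasons). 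The assignment $d_2\theta=0$, $d_2\sigma=u^2\theta$ satisfies the Leibniz rule (since $\theta^2=0$, it also gives $d_2(\theta\sigma)=0$, so the duality pairing of $H^1$ with $H^2$ and $H^0$ with $H^3$ sees nothing), and is compatible with restriction to the fixed set; yet it is not the true differential, since it would force the stable rank of $E_\infty$ to be $2$, i.e.\ a single fixed circle instead of two. So every constraint on your list is consistent with a fake nonzero differential that produces the wrong answer; genuine additional input is needed (equivariant duality over $\Z_2[u]$, a component-by-component localization argument, or the direct Gysin/Mayer--Vietoris computation on the quotient, which you mention only as an alternative and never carry out). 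As written, the proposal is an outline whose central step is both unproven and unprovable by the methods it specifies.

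It is worth noting that the paper does no Smith theory at all: its proof is exactly the remark you relegate to a final ``cross-check.'' The paper quotes Turaev's classification of oriented Montesinos links, in which spin structures on the double branched cover are placed in canonical bijection with quasiorientations of the branch set; counting the two sides gives $2^{k-1}=2^{\dim_{\Z_2}H^2(M;\Z_2)}$, which is the stated identity. In other words, the correct strategy was in your hands, but demoted to a plausibility check --- and the phrase ``a natural correspondence between these data'' is precisely the nontrivial content (Turaev's theorem), not something that may be assumed while proving the identity it implies.
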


\section{More Examples}
In this section we consider additional examples representing the behavior of of all relevant involutions. There are exactly two involutions of a circle up to conjugation in the homeomorphism group: reflection, and half-rotation. Reflection fixes exactly two points, half-rotation has no fixed points. Similarly, up to conjugation by a homeomorphism there are exactly four involutions of $S^3$. They are given by sending $(x_1,x_2,x_3,x_4)$ to one of $(-x_1,x_2,x_3,x_4)$, or $(-x_1,-x_2,x_3,x_4)$, or $(-x_1,-x_2,-x_3,x_4)$, or $(-x_1,-x_2,-x_3,-x_4)$. This follows from the resolution of the Smith conjecture \cite{MB}. The involutions of $S^3$ are specified by their fixed point set. The fixed point sets of the four involutions are $S^2$, $S^1$, $S^0$, and $\emptyset$ respectively. Thus one can categorize knots that are setwise invariant under an involution into types according to the fixed point set in $S^3$ and the fixed point set in $S^1$. The possibilities
are:
\begin{itemlist}
\item[$(S^2,S^1)$] Here the only possibility is the unknot. This type of involution does not lead to branched covers.
\item[$(S^2,S^0)$] Here the only possibilities are the unknot and composite knots. This does not lead to branched covers either.
\item[$(S^1,S^1)$] Here the only possibility is the unknot. This gives the standard $2$-fold branched covering projection from $S^3$ to itself.
\item[$(S^1,S^0)$] This is fairly common among small crossing number knots. The $5_2$ knot admits such a symmetry as seen in our first example (figure \ref{2/3slope}). The knot projects to an arc in the quotient, thus this type of symmetry induces $2$-fold branched covering projection from any filling of such a knot complement to $S^3$.
\item[$(S^1,\emptyset)$] This is also fairly common among small crossing number knots. The $10_{98}$ knot admits such a symmetry (figure \ref{10_98}). This type of symmetry always leads to a $2$-fold branched covering projection. Here it is worth keeping track of whether the quotient of the knot is knotted or the unknot. This is because the quotient manifold will be a $3$-manifold other than $S^3$ when the original manifold is non-trivial and the quotient of the knot is knotted, or if the original manifold is not a homology sphere and the quotient of the knot is the unknot.
\item[$(S^0,S^0)$] This may occur as seen with the $8_{17}$ knot (figure \ref{810817}). The symmetry only extends to the trivial filling or to $0$-filling. Notice that the quotient of the trivial filling is not a $2$-fold branched cover because the quotient is not a manifold due to the singularity near the image of the fixed point. With $0$-filling the induced action has no fixed points and the resulting symmetry induces a $2$-fold branched cover ($2$-fold cover, in fact), to a non-orientable manifold.
\item[$(S^0,\emptyset)$] This may also occur, but it will never lead to a branched cover, as the quotient would have a neighborhood homeomorphic to a cone on the projective plane.
\item[$(\emptyset,\emptyset)$] This may also occur as seen with the $10_{155}$ knot (figure \ref{10155}). Since the $\Z_2$ action is free, the branch set would be contained in the solid torus added in the filling, or the induced quotient projection will be an unramified $2$-fold cover. The quotient will also be orientable, since the non-trivial deck transformation preserves orientation.
\end{itemlist}

\subsection*{Example 3}
For this paper, symmetries of types $(S^1,S^0)$ and $(S^1,\emptyset)$ will be the most important because these are the main types that produce interesting $2$-fold branched covers.
It is quite common for small crossing number knots to admit both types of symmetries. The $5_2$ knot admits both, as can be seen in the projection on the left of figure \ref{5_28_5}.
This projection is invariant under $1/2$ rotations about the blue horizontal axis, the green vertical axis, and an axis coming out of the center of the page. The blue axis misses the knot, and the other two axes both meet the knot in two points. The image of the knot in the quotient is just the unknot.

\begin{figure}[!ht]  
\hskip75bp
\[\begin{array}{cc}\begin{array}{c}
\includegraphics[width=35mm]{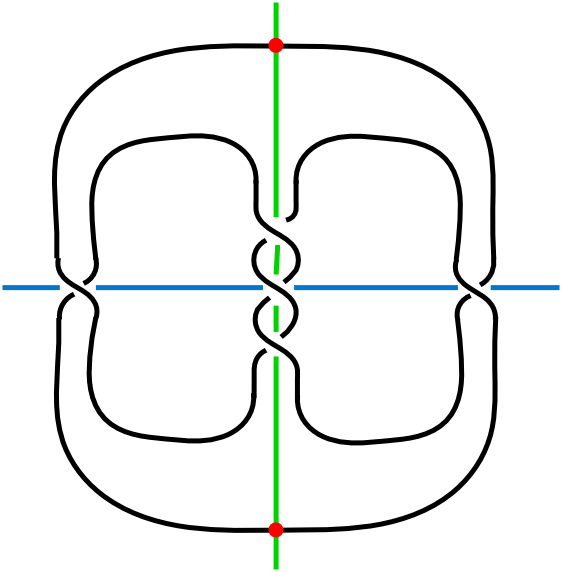}\hskip85bp \end{array}&
\begin{array}{c}\includegraphics[width=35mm]{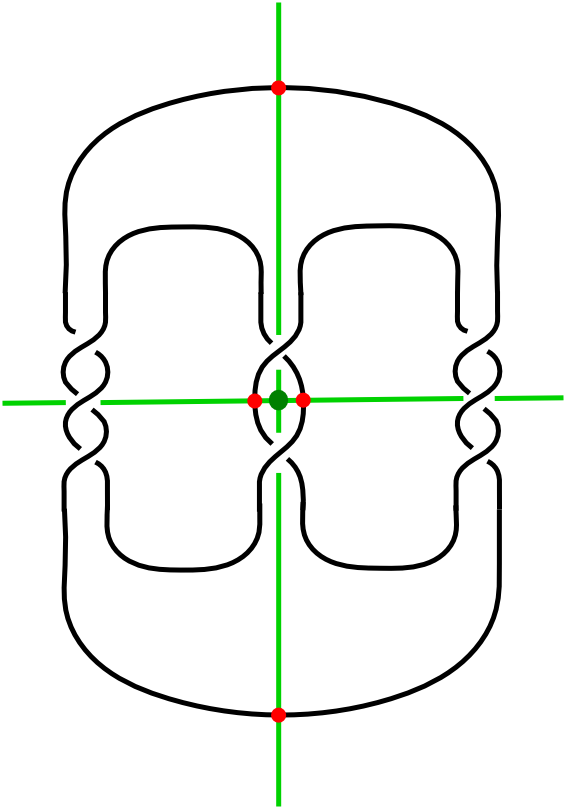}\end{array}
\end{array}
\]
\caption{Full symmetry of $5_2$ and $8_5$}\label{5_28_5}
\end{figure}

The $8_5$ knot has the same symmetry group, but the image of the knot under the quotient of the rotation that doesn't fix any point on the knot is knotted (it is a trefoil). This implies that any manifold obtained by non-trivial surgery on the $8_5$ knot is a $2$-fold branched cover of $S^3$ (via deck transformations induced by rotations that fix points on the knot), and is a $2$-fold branched cover over a $3$-manifold that is not $S^3$ (via deck transformations induced by rotations that do not fix any point on the knot.) This includes the integer homology spheres obtained by $1/n$ surgery. We know that non-trivial surgeries on non-trivial knots are non-trivial manifolds by the Gordon-Luecke theorem, \cite{GL}.

The same argument shows that any non-homology sphere that is surgery on the $5_2$ knot both $2$-fold branch covers $S^3$ and non-simply connected manifolds. The manifold $S^3_{5_{2}}(1/3)$ has symmetry $\mathbb{Z}_2\oplus\mathbb{Z}_2$ as expected from the left side of figure \ref{5_28_5}. Each of the involutions generates a double branched covering to $S^3$, thus this manifold only branched double covers $S^3$.

\subsection*{Examples 4 and 5}

We have seen that many knots admit symmetries of types $(S^1,S^0)$ and $(S^1,\emptyset)$, so that most surgeries on these knots $2$-fold branched cover $S^3$ and a non-simply connected manifold. The $10_{98}$ knot from example 2 had only a symmetry of type $(S^1,\emptyset)$, thus generic surgeries on it $2$-fold branched cover non-simply-connected manifolds, but do not $2$-fold branched cover $S^3$. In the other direction, the $8_{10}$ knot from figure \ref{810817} admits only a symmetry of type $(S^1,S^0)$. Thus generic surgeries on this knot $2$-fold branched cover $S^3$ but no other manifold.

\begin{figure}[!ht]  
\hskip55bp
\[\begin{array}{cc}\begin{array}{c}
\includegraphics[width=35mm]{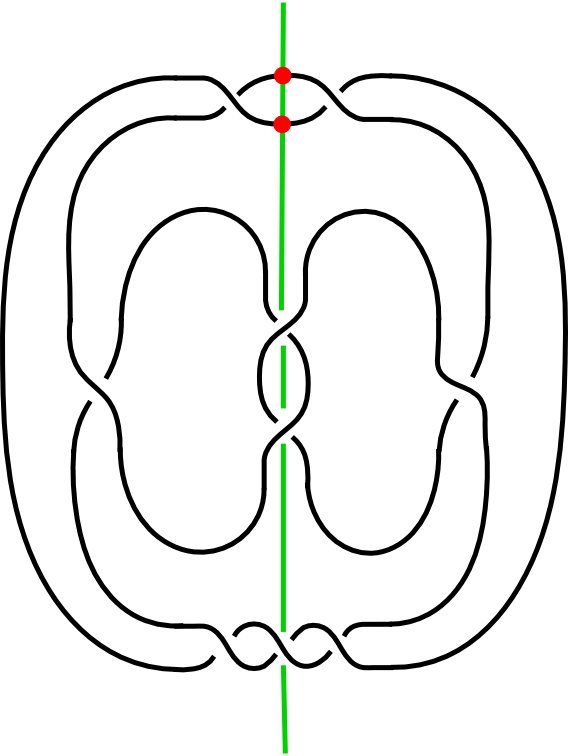} \end{array}&
\hskip20bp\begin{array}{c}\includegraphics[width=55mm]{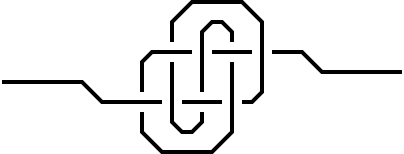}\end{array}
\end{array}
\]
\caption{The $8_{10}$ and $8_{17}$ knots}\label{810817}
\end{figure}

The $8_{17}$ knot from figure \ref{810817} is an interesting example. The only symmetry it admits is of type $(S^0,S^0)$. This symmetry restricts to a half rotation on the meridian and to a reflection on the longitude. For this symmetry to extend the surgery slope must be taken to itself. As $p\mu+q\lambda$ is mapped to $p\mu-q\lambda$ we see that this can only occur if $p/q=\infty$ or $p/q=0$. For zero surgery, the symmetry extends over the $S^1\times D^2$ via $(z,w)\mapsto (-z,\bar w)$. This has no fixed points, so the resulting quotient projection is a covering projection. Since the symmetry is orientation reversing, the quotient is non-orientable. Generic surgeries on this knot will have no symmetries, so the resulting manifolds will not $2$-fold branched cover any manifold.

The knot in figure \ref{92351111} has no symmetries, so generic surgeries on this knot do not $2$-fold branched cover any $3$-manifold. The smallest knot with no symmetries is the $9_{32}$ knot.

\subsection*{Example 6 and Torus Knots}

The quotient of $S^3$ by an involution with no fixed points is $\mathbb{R}P^3$. Thus to get a knot with a symmetry of type $(\emptyset,\emptyset)$, one may take any homologically non-trivial knot in $\mathbb{R}P^3$ and consider its lift into $S^3$. Easy examples are given by torus knots $T(p,q)$ with both $p$ and $q$ odd.

\begin{figure}[!ht]  
\hskip35bp
\includegraphics[width=105mm]{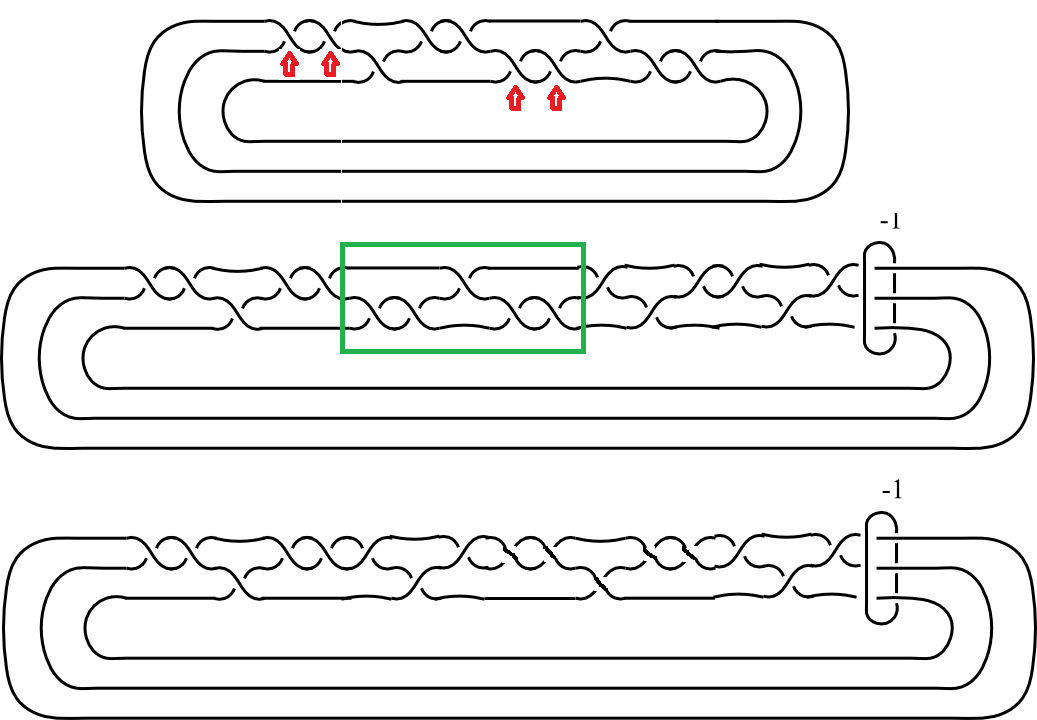}
\caption{The $T(3,5)$ ($10_{124}$) knot}\label{t35quot}
\end{figure}

Recall that a torus knot is given by $(\lambda^p,\lambda^q)$ inside $S^3$ viewed as pairs of complex numbers with $\text{max}(|z|,|w|)=1$. The antipodal involution $(z,w)\mapsto (-z,-w)$ clearly has no fixed points and preserves the torus knots $T(p,q)$ setwise for odd $p$ and $q$. It also takes the meridian to the meridian and the longitude to the longitude homologically, so any surgery extends. Since this involution has no fixed points, it does not even fix a meridian of the knot setwise. A longitude of the torus knot is given by the boundary of a Seifert surface. In this case a Seifert surface is given by the radial projection of $\{(z,w)\in\C^2 | z^q-w^p =1 \}$. It is clear that such a surface is disjoint from its image under the antipodal involution. It follows that this rotation does not fix a longitude of the knot setwise.

The special case of $T(3,5)$ is displayed in figure \ref{t35quot}, along with a surgery description. This figure is a bit different from the usual figure of this knot. The reason for using this presentation, is that it is closer to the $10_{155}$ knot that we will analyze next. In fact, changing the four crossings labeled with arrows will give the $10_{155}$ knot. The antipodal involution rotates both factors of the solid torus one-half way around. Indeed, rotating the right block of five crossings half way around a horizontal axis will make them match with the left block of five crossings, and rotating the block to the left will bring it to the position of the left block.

Notice that the knot fits fairly naturally in a solid torus, and
\[S^3=\partial (D^2\times D^2)=(\partial D^2)\times D^2 \cup D^2\times \partial D^2 =(S^1\times D^2) \cup (D^2\times S^1)\,.\]
Twisting one of the solid tori via a self-diffeomorphism $h_n: S^1\times D^2 \to S^1\times D^2$ given by $h_n(\lambda,z) = (\lambda, \lambda^n z)$, and changing the attaching map between the solid tori so the same boundary points of the solid tori are still identified will change the representation of the manifold. This process is called a {\it Rolfsen twist}, see \cite{rolfsen}.
A left-handed Rolfsen twist adds the full left twist that appears on the right side of the braid in the second part of figure \ref{t35quot}. Notice that the three crossings closest to the $-1$ framed circle just form a half twist about three strands, so the six crossing next to this circle are just one full left twist.

Giving the tangle in the green box a one-half rotation about the horizontal axis will have the effect of moving the closest left handed half twist to the other side of the box resulting in the representation on the bottom of the figure in which the antipodal involution may be seen as a one-half rotation. The quotient is then easily recognized as a homologically non-trivial knot in $\mathbb{R}P^3$. The same procedure will work with any knot having this type of symmetry. The quotient is displayed in figure \ref{t35quot2}. This figure further simplifies the quotient to make the Seifert fibered structure apparent.

\begin{figure}[!ht]  
\hskip35bp
\includegraphics[width=105mm]{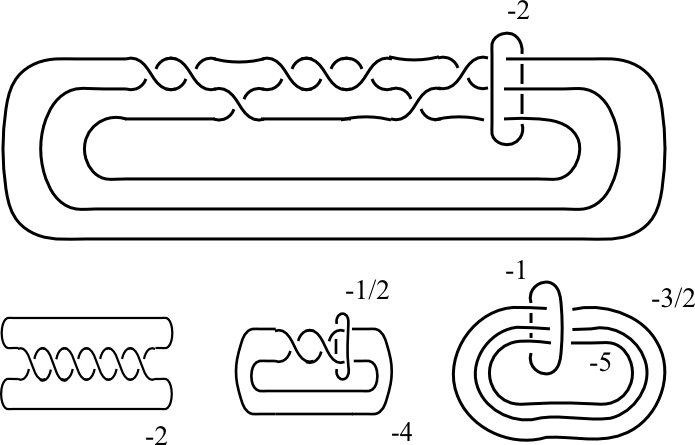}
\caption{The quotient of the $T(3,5)$ knot}\label{t35quot2}
\end{figure}

Since neither the meridian or longitude is fixed setwise, the rotation of the boundary torus induced by the antipodal involution is given by $(x,y)\mapsto(x+\frac12,y+\frac12)$. It follows that the indivisible homology class $r\mu+s\lambda$ on the boundary has a setwise fixed representative if and only if there is a $t\in\R$ such that $rt\cong st \cong 1 \ (\text{mod} 1)$, and this holds if and only if $r$ and $s$ are both odd.

When there is no such setwise fixed representative, the involution extends as a free involution to the $r/s$ filling, so the resulting filling is an unramified $2$-fold cover of a non-trivial $3$-manifold.
When there is a setwise fixed representative, the core of the filling torus will be the branch locus of the involution. The same arguments may be used with any knot admitting a symmetry of type $(\emptyset,\emptyset)$.

When  there is a setwise fixed representative, we do not need the explicit surgery descriptions to understand the quotients of the torus knots because the exterior of each of these has the structure of a Seifert fiber space given by the group action $(z,w)\cdot\lambda=(\lambda^pz,\lambda^qw)$.
The orbits of $(1,0)$ and $(0,1)$ are singular fibers of this Seifert fibration. The Seifert invariants of these singular fibers are $(p,u)$ and $(q,v)$ where $u$ and $v$ are integers with $pu+qv=1$.
For odd $(p,q)$ the antipodal involution preserves the fibers so the quotient will also be a Seifert fiber space. This remains the case for all Dehn fillings. The Seifert invariants of the quotient will be \[\{0,(Oo,0),(p,2u),(q,2v),(r,s)\}\,.\]
Unless $r=\pm1$ the fundamental group of this manifold will surject onto the $(p,q,r)$ triangle group. In general, the order of the first homology of this manifold is $|2r-pqs|$, and it is easy to check that this will never be one when $r=\pm1$ and $p$, and $q$ are odd and relatively prime.

While we are discussing torus knots, notice that the involution $(z,w)\mapsto (\bar z,\bar w)$ preserves any torus knot setwise, thus every one has a symmetry of type $(S^1,S^0)$ and therefore any surgery on a torus knot $2$-fold branched covers $S^3$. There is an important difference between torus knots and hyperbolic knots. Whereas the isometry group, mapping class group and outer automorphism group of a finite volume hyperbolic manifold are all isomorphic and all finite, this is no longer true for Seifert fiber spaces and torus knots. For example, the isometry group of the torus knots include the following $1$-parameter subgroup of isomorphisms: $f_\lambda(z,w)=(\lambda^pz,\lambda^qw)$, so the type $(\emptyset,\emptyset)$ isometries of torus knots described above are non-trivial as isometries, but are trivial elements of the mapping class group.

\begin{figure}[!ht]  
\hskip35bp
\includegraphics[width=105mm]{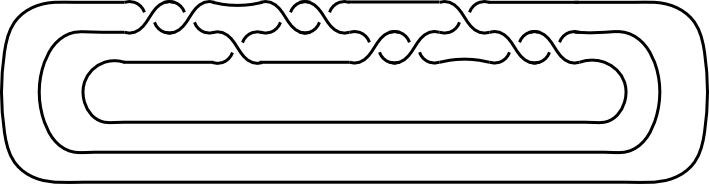}
\caption{The $10_{155}$ knot}\label{10155}
\end{figure}

The $10_{155}$ knot is displayed in figure \ref{10155}. It is obtained from the $T(3,5)$ torus knot by symmetrically changing four crossings. It follows that it has the same two involutions -- one of type $(S^1,S^0)$ and one of type $(\emptyset,\emptyset)$. We have analyzed several examples of $(S^1,S^0)$ involutions, as always the $(S^1,S^0)$ symmetry induces $2$-fold branched covering projections from any Dehn filling to $S^3$.

We will now describe the quotients arising from $(\emptyset,\emptyset)$ involutions in more detail. First, to construct a knot with this type of symmetry, one may start with a tangle, then glue it to a second copy rotated half way around a horizontal axis, and close the compound tangle. Assuming that the result is a knot, it will be a knot with an $(\emptyset,\emptyset)$ involution. The $T(3,5)$ torus knot has this structure, built from the tangle in the green box of figure \ref{t35quot}. The $10_{155}$ knot also has this structure. A method to construct a surgery description of the quotient of a Dehn filling with this type of symmetry was just described in the context of the $T(3,5)$ torus knot. The summary is to do a Rolfsen twist to  get a link with the structure of a satellite of one component of the Hopf link in which the pattern for the  has the form: rotated tangle, tangle, $1/2$ twist, $1/2$ twist. Rotating the unrotated tangle
gives rotated tangle, $1/2$ twist,  rotated tangle, $1/2$ twist. The $2$-fold symmetry is then apparent and the quotient is just a satellite of one component of the Hopf link with pattern: rotated tangle, $1/2$ twist.

It is not immediately clear if the quotient is $S^3$. When the original knot is hyperbolic, as in the case of the $10_{155}$ knot, the quotient of a filling will never be $S^3$.
Since the original knot is a hyperbolic knot, the quotient of its exterior under the free action will be hyperbolic. The question of which induced quotients will be $S^3$ is a special case of the exceptional surgery problem -- all but a finite number of surgeries are known to be hyperbolic by the work of Thurston.  The point in this case is that the quotient of a hyperbolic knot by a $(\emptyset,\emptyset)$ symmetry will be a hyperbolic (and therefore, non-trivial) knot in $\mathbb{R}P^3$, and a theorem of Kronheimer, Mrowka, Ozsv{\'a}th, and Szab{\'o}, proved using the same techniques that proved property P, states that a filling of such a knot can never be $S^3$, \cite{kmos}.

\section{Exceptional Symmetries}
We have seen that the possible quotients of $2$-fold branched covers of a $3$-manifold are determined by the involutions of the manifold. Thus if one understands all symmetries of the manifold one also understands all such quotients. When the manifold is surgery on a knot these symmetries can generally be understood via symmetries of the knot. This is similar to the situation with hyperbolic structures. Thurston's hyperbolic Dehn surgery theorem \cite{thds,thurston-notes} states that all but a finite number of Dehn fillings of a complete hyperbolic manifold with one cusp end admit hyperbolic structures. A similar thing holds true
for symmetries. For all but a finite number of Dehn fillings of a complete hyperbolic manifold with one cusp, any symmetry of the Dehn filling restricts to a symmetry of the original manifold. This is the content of our next result. In fact, the proof of this exceptional symmetry theorem follows the proof of the the hyperbolic Dehn surgery theorem. It relies on the fact that the length of the shortest geodesic tends to zero as the filling tends to infinity in Dehn surgery space, and for large fillings the shortest geodesic will be in the core of the filling. This observation has been used in the past. For example, Kojima used it when proving that two prime knots which have infinitely many homeomorphic branched covers are equivalent \cite{koj2}.

We describe families of hyperbolic structures, Dehn filling space, and the proof of Thurston's Dehn surgery theorem before addressing the exceptional symmetry theorem. Any complete hyperbolic $3$-manifold has the form $\mathbb{H}^3/\Gamma$ where $\Gamma$ is a discrete subgroup of isometries acting freely. This follows from something known as the Cartan-Hadamard theorem. Of course any space of this form is a complete hyperbolic $3$-manifold. It is clear that the fundamental group of $\mathbb{H}^3/\Gamma$ is $\Gamma$. The quotients by conjugate groups are homeomorphic and, in fact, isometric. Thus, to understand complete hyperbolic structures on a manifold $M$, one just needs to understand certain homomorphisms
$\pi_1(M) \to \text{Isom}(\mathbb{H}^3)$ up to conjugation.

The upper half space model for hyperbolic $3$-space is $\C\times (0,\infty)$ with metric $g=t^{-2}(dx^2+dy^2+dt^2)$ with $(x+{\bf i}y,t)$ the coordinates on $\C\times (0,\infty)$. One can check that geodesics are vertical lines, and open semicircles perpendicular to the boundary $\C\times \{0\}$. Since any point lies on at least $2$, and in fact, infinitely many, geodesics any isometry will be specified by the action of its natural extension to the boundary at infinity. It turns out that the orientation
preserving isometries act as linear fractional transformations, and any linear fractional transformation induces an orientation preserving isometry. As usual one may identifying linear fractional transformations and projective equivalence class of a matrices as:
 \[
 A(z)=\frac{az+b}{cz+d} \iff \begin{pmatrix} a&b\\c&d\end{pmatrix}\,.
 \]
Thus we identify $\text{Isom}(\mathbb{H}^3)$ with the projective special linear group, $\text{PSL}_2\C$, i.e. $2\times 2$ complex matrices with unit determinant mod out by non-trivial scale.

It is natural to look for homomorphisms of $\pi_1(T^2\times\R) = \langle\alpha,\beta\,|\, [\alpha,\beta]=1 \rangle$ up to conjugation. Notice that the set of homomorphisms of $\pi_1$ into $\text{SL}_2\C$ is described as the solution to a set of polynomial equations. (This is true for the homomorphisms of any group into $\text{SL}_2\C$.) This algebraic set may be given the subspace topology viewed as a subset of $\C^4$. The representation space may then be given the quotient topology. There are other ways to topologize this space, for example using geometric invariant theory. The arguments that we wish to make will work with several different topologies.  An alternative is to put a topology on the space of all subgroups of
$\text{PSL}_2\C$, then recognize that the image of any representation arising from a complete, finite volume hyperbolic metric is a closed subgroup.
More details about convergence of spaces of hyperbolic manifolds may be found in \cite{ben}.

The eigenvalues of any matrix in $\text{SL}_2\C$ must be $\lambda$ and $\lambda^{-1}$ since the determinant is $1$. If one eigenvalue of the matrix associated to $\alpha$ is not $1$, the matrix will be diagonalizable, so choosing an appropriate conjugate diagonal, and the only matrices that will commute with this will also be diagonal. These correspond to the linear fractional transformations of the form $z\mapsto \lambda z$. If $\lambda$ is a unit complex number the action will have fixed points $(0,t)$. In this case the isometry is called {\it elliptic}, but the quotient would not be a manifold. Thus, up to conjugacy a representation of $\pi_1(T^2)$ with one element going to a diagonal matrix will be given by $\rho(\alpha)=A$ and $\rho(\beta)=B$, with $A(z)=az$, $B(z)=bz$ where $|a|, |b|\neq 0,1$. Such isometries are called {\it hyperbolic}. One can check that these do not give rise to complete hyperbolic metrics on $T^2\times\R$. This means that the matrix of the image of $\alpha$ under a representation corresponding to a complete structure must have non-trivial Jordan form with a multiplicity $2$ eigenvalue of $1$ or $-1$. Via projective equivalence we may assume the eigenvalue is $1$. Up to conjugacy we may take $A(z)=z+1$ and $B(z)=z+\zeta$. Such isometries are called {\it parabolic}. For $\{1,\zeta\}$ independent over $\R$ it is clear that the quotient of this action on $\C$ is a torus, and that the quotient of this action on $\mathbb{H}^3$ is a complete hyperbolic structure on $T^2\times \R$.

When $A_w\to A_\infty$ and $B_w\to B_\infty$ in is natural to say the spaces $\mathbb{H}^3/\langle A_w, B_w\rangle$ converge to $\mathbb{H}^3/\langle A_\infty, B_\infty\rangle$. Set $A_\infty(z)=z+1$, $B_\infty(z)=z+\zeta$, $A_w(z)=e^{2\pi {\bf i}/w}z$, $B_w(z)=e^{2\pi \zeta {\bf i}/w}z$, and $E_w(z)=-\frac12 {\bf i} \left(\sin(2\pi/w)\right)^{-1}z+\left(\sin(2\pi/w)\right)^{-1}$. One then checks that $E_w\circ A_w\circ E_w^{-1}(z) = e^{4\pi {\bf i}/w}z + e^{2\pi {\bf i}/w}$ and
\[
E_w\circ B_w\circ E_w^{-1}(z) = e^{4\pi\zeta {\bf i}/w}z + e^{2\pi\zeta {\bf i}/w}\frac{(e^{2\pi\zeta {\bf i}/w}-e^{-2\pi\zeta {\bf i}/w})}{(e^{2\pi {\bf i}/w}-e^{2\pi {\bf i}/w})}\,.
\]
Finally, one sees that $E_w\circ A_w\circ E_w^{-1}\to A_\infty$ and $E_w\circ B_w\circ E_w^{-1}\to B_\infty$ as $w\to\infty$. Thus the spaces $\mathbb{H}^3/\langle A_w, B_w\rangle$ do converge to $\mathbb{H}^3/\langle A_\infty, B_\infty\rangle$ as $w\to\infty$.

The these spaces are parameterized by $w\in \overline{\C}=\C\cup\{\infty\}$ and this is the {\it hyperbolic Dehn filling space}. Setting $\rho_w:\pi_1(T^2)\to\text{PSL}_2\C$ to be the representation taking $\alpha$ to $A_w$ and $\beta$ to $B_w$, allows us to define a continuous map $w\mapsto \rho_w$ from Dehn filling space to the representation variety:
\[
DF: \overline{\C}\to\mathcal{R}(T^2)\,.
\]
To simplify our exposition of the relationship between Dehn surgery and Dehn filling space, we will now restrict to $\zeta = {\bf i}$. The arguments for general $\zeta$ are similar, with slightly more complicated algebra.
Let $p$ and $q$ be a pair of relatively prime integers and and consider the space corresponding to \newline $w=p+q{\bf i}$. One has $\rho_{p+{\bf i}q}(\alpha^p\beta^q)=A_{p+q{\bf i}}^pB_{p+q{\bf i}}^q=1$.
Pick integers $n$ and $m$ so that $pn-qm=1$, then $\alpha^p\beta^q$ and $\alpha^m\beta^n$ generate $\pi_1(T^2)$, so $\langle A_{p+q{\bf i}}, B_{p+q{\bf i}}\rangle$ is generated by $A_{p+q{\bf i}}^mB_{p+q{\bf i}}^n$. It follows that the fundamental group of $\mathbb{H}^3/\langle A_{p+q{\bf i}}, B_{p+q{\bf i}}\rangle$ is infinite cyclic and the space is homeomorphic to $S^1\times \R^2$. This can be constructed by removing $T^2\times (1,\infty)$ from $T^2\times (0,\infty)$ and glueing in $S^1\times D^2$ so that $\{1\}\times \partial D^2$ is glued to a curve in the homotopy class of $\alpha^p\beta^q$.

We can even understand the geometry of this space. Any point of the form $(0,t)$ is the maximum of each of a pair of transverse semicircles that are perpendicular to $\C\times \{0\}$. The map rotates the feet of these semicircles some amount and dilates them by $|e^{2\pi m i/(p+q{\bf i})}e^{-2\pi n /(p+q{\bf i})}|=e^{-2\pi/(p^2+q^2)}$. It follows that the point $(0,e^{2\pi/(p^2+q^2)})$ is equivalent to the point $(0,1)$.
Thus, the geodesic $\{0\}\times (0,\infty)$ projects to a geodesic in $S^1\times \R^2$ of length
\[
\int_1^{e^{2\pi/(p^2+q^2)}} t^{-1}\,dt=2\pi(p^2+q^2)^{-1}\,.
\]
This geodesic is the core geodesic, it is the geodesic in the free homotopy class of a generator of the fundamental group of $S^1\times \R^2$. It is clear that the length of the core geodesic tends to zero as $p+{\bf i}q\to \infty$. In general, the transformation $A(z)=e^{\ell}z$ for $\ell\neq 0$ gives rise to a geodesic loop covered by $\{0\}\times (0,\infty)$. The {\it complex length} of this loop is $\ell$.

Given a complete, finite volume, hyperbolic manifold with one cusp, $M$, there is a proper embedding of $T^2\times[0,\infty)$ that induces an injection on the level of fundamental groups. In the case of a hyperbolic knot complement, it is natural to take this embedding so that the elements $\alpha$ and $\beta$ of the fundamental group of $T^2$ get mapped to the meridian and longitude respectively. The complete hyperbolic structure on $M$ induces one on $T^2\times\R$ obtained as the corresponding cover. On the level of representations, the representation $\rho_M:\pi_1(M)\to\text{PSL}_2\C$ arising from the hyperbolic structure induces aa representation $\rho_T:\pi_1(T^2)\to \text{PSL}_2\C$ by restriction. Denote that space of all representations, i.e. homomorphisms mod conjugacy of $\pi_1(X)$ into
$\text{PSL}_2\C$ by $\mathcal{R}(X)$.

The heart of the proof of Thurston's Dehn filling theorem is the fact that the image of an open set about $\rho_M\in \mathcal{R}(M)$ under restriction includes an open set about the restriction $\rho_T\in\mathcal{R}(T^2)$, and in particular contains the image of an open set about $\infty\in\overline{\C}$ in $\mathcal{R}(T^2)$. We record this as a proposition here.

\begin{proposition}\label{heart}
If $M$ is a complete hyperbolic manifold with one cusp end $T^2\times(0,\infty)\hookrightarrow M$, $\rho_M\in\mathcal{R}(M)$ is the corresponding representation, and $\rho_T\in\mathcal{R}(T^2)$ is the induced representation, then the image of any neighborhood of $\rho_M$ in $\mathcal{R}(M)$  under the induced map includes an open neighborhood of $\rho_T$ in $\mathcal{R}(T^2)$.
\end{proposition}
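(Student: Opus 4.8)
The plan is to reduce the statement to a local question about the smooth structure of the representation varieties $\mathcal{R}(M)$ and $\mathcal{R}(T^2)$ near the complete structure $\rho_M$, and then to invoke a dimension count together with the inverse (or implicit) function theorem. The underlying principle is exactly Thurston's: the hyperbolic Dehn surgery theorem asserts that filling deformations of the complete structure exist and fill up a neighborhood of the cusp shape, and the cleanest modern way to organize this is via the local structure of the $\mathrm{PSL}_2\C$ character variety.

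First I would recall the relevant dimension estimates. A standard Weil--type rigidity computation (or Thurston's original argument via the trace field) shows that $\mathcal{R}(M)$ is, near $\rho_M$, a smooth complex manifold of complex dimension equal to the number of cusps, here exactly $1$. The tangent space to the character variety at a representation is computed by the group cohomology $H^1(\pi_1(M); \mathfrak{sl}_2\C)$ with coefficients twisted by $\mathrm{Ad}\,\rho_M$, and for a one-cusped hyperbolic manifold a half-lives-half-dies argument in the long exact sequence of the pair $(M, \partial M)$ forces $\dim_\C H^1(\pi_1(M)) = 1$ while $\dim_\C H^1(\pi_1(T^2)) = 2$. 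Meanwhile $\mathcal{R}(T^2)$ is smooth of complex dimension $1$ near $\rho_T$ as well (the parabolic locus is cut out by one trace condition), and the Dehn filling coordinate $w \in \overline{\C}$ provides an explicit chart via the map $DF$ constructed above. So the restriction map $r : \mathcal{R}(M) \to \mathcal{R}(T^2)$ is, at the level of tangent spaces, a map between spaces of the same complex dimension.

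The key step, and where the real content lies, is to show that the differential $dr$ at $\rho_M$ is an isomorphism onto the tangent space of the relevant slice of $\mathcal{R}(T^2)$ — equivalently, that the restriction $H^1(\pi_1(M)) \to H^1(\pi_1(T^2))$ is injective with image transverse to the parabolic direction. This non-degeneracy is precisely Thurston's statement that the derivative of the cusp shape (or of the log-holonomy coordinates $(u,v)$ of the meridian and longitude) is nonsingular at the complete structure; it can be proven either via Poincaré duality on the torus (the cup product pairing on $H^1(\pi_1(T^2))$ is symplectic, and the image of restriction is a Lagrangian that pairs nontrivially with the filling direction) or by Neumann--Zagier's explicit computation that the derivative of the holonomy map is an isomorphism. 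Once $dr$ is an isomorphism, the inverse function theorem gives that $r$ is a local diffeomorphism near $\rho_M$, so the image of any neighborhood of $\rho_M$ contains a full neighborhood of $\rho_T$ in $\mathcal{R}(T^2)$.

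The main obstacle I anticipate is establishing the non-degeneracy of $dr$ cleanly, since this is the analytic heart of hyperbolic Dehn surgery and is the one place where one cannot merely count dimensions; the local smoothness of $\mathcal{R}(M)$ at $\rho_M$ (i.e. the absence of obstructions in $H^2$) must also be justified, and the argument is mildly delicate because one is working at a parabolic (cusped) representation where the torus subgroup is not generic. I would handle smoothness by the standard vanishing of $H^2(\pi_1(M); \mathfrak{sl}_2\C)$ for a one-cusped hyperbolic manifold, which follows from Poincaré--Lefschetz duality together with the observation that the boundary restriction controls the obstruction, and then reduce the non-degeneracy to the Neumann--Zagier symplectic structure, which I would cite rather than rederive. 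Everything else — the explicit convergence $\rho_w \to \rho_\infty$ and the identification of the filling coordinate $w$ — is already supplied by the construction of $DF$ preceding the proposition.
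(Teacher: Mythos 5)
Your proposal contains a genuine gap, and the gap is exposed by your own bookkeeping. You correctly compute $\dim_\C H^1(\pi_1(M);\mathfrak{sl}_2\C)=1$ and $\dim_\C H^1(\pi_1(T^2);\mathfrak{sl}_2\C)=2$, and you correctly record (via the half-lives-half-dies/symplectic cup product argument) that the image of the restriction $H^1(\pi_1(M))\to H^1(\pi_1(T^2))$ is a Lagrangian, i.e.\ a \emph{line} in a two-dimensional space. But then $dr$ has rank $1$ into a $2$-dimensional tangent space, so $r$ is at best an immersion whose image is locally a $1$-complex-dimensional curve in $\mathcal{R}(T^2)$; the inverse function theorem can only give that the image of a neighborhood of $\rho_M$ contains a neighborhood of $\rho_T$ \emph{inside that curve} (your ``slice''), never a full open neighborhood of $\rho_T$ in $\mathcal{R}(T^2)$. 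Your parenthetical claim that $\mathcal{R}(T^2)$ is ``smooth of complex dimension $1$ near $\rho_T$'' because the parabolic locus is cut out by one trace condition contradicts your own computation $\dim H^1(\pi_1(T^2))=2$, and restricting the target to the parabolic locus would in any case destroy the intended application: the Dehn-filling representations $DF(w)$, $w\neq\infty$, that must be reached are precisely the non-parabolic ones (loxodromic on the cusp). So the final inference ``$dr$ is an isomorphism onto the slice, hence the image contains a neighborhood of $\rho_T$ in $\mathcal{R}(T^2)$'' is a non sequitur; indeed your rank count shows that no argument can make $r$ open into $\mathcal{R}(T^2)$.

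For comparison: the paper does not give its own proof but defers to Thurston's argument (drill a properly embedded line so the end becomes a collar on a genus-$2$ surface, make a dimension count, reglue) and to the Cooper--Hodgson--Kerckhoff linearization via what is effectively the submersion theorem; your route is an expansion of the latter, so the ingredients you assemble (smoothness of $\mathcal{R}(M)$ via vanishing of obstructions, $\dim_\C=1$, Neumann--Zagier nondegeneracy) are the right ones. What those references actually establish, however, and what the Exceptional Symmetry Theorem needs, is openness of a \emph{different} map: compose the curve $u\mapsto (u,v(u))$ of boundary holonomies with the generalized Dehn-filling coefficient map, assigning to $u$ the real pair $(p,q)$ with $pu+qv(u)=2\pi{\bf i}$, valued in $\R^2\cup\{\infty\}$. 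There the dimensions match (real $2$ to real $2$), the nondegeneracy is exactly your Neumann--Zagier input $v(u)=\zeta u+O(u^3)$ with $\zeta\notin\R$, and openness at the complete structure yields that all but finitely many coprime $(p,q)$ are realized by representations in the image of $i^*$ that kill $\alpha^p\beta^q$. Repairing your proposal means replacing ``$r$ is open into $\mathcal{R}(T^2)$'' by ``the filling-coefficient map is open at $\infty$,'' which is also the sense in which the proposition is actually used in the paper (through $(DF)^{-1}(\mathcal{U})$).
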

Thurston's proof is to remove a suitable, properly embedded line so that the end of the manifold becomes a collar on a surface of genus $2$, make a dimension count, and then analyze the relation induced by glueing back the line, \cite{thurston-notes}. An alternate approach is to linearize the problem about $\rho_M$, using what is effectively the submersion theorem. This is what Cooper, Hodgson, and Kerckhoff do, \cite{chk}.
We will use this result, together with some facts about convergence of sequences of hyperbolic manifolds, that may be found in \cite{ben}.

\begin{theorem}[Exceptional Symmetry Theorem]
Let $M$ be a complete hyperbolic $3$-manifold with one cusp end. Picking a basis for the first homology of this end allows one to identify Dehn fillings with the extended rational numbers. For all but a finite number of $p/q \in \overline{\Q}:=\Q\cup \{\infty\}$, the manifold $M(p/q)$ is hyperbolic and the isometry group $\hbox{\rm Isom}(M(p/q))$ is isomorphic to a subgroup of $\hbox{\rm Isom}(M)$.
\end{theorem}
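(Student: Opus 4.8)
The plan is to follow the proof of Thurston's Dehn surgery theorem and to exploit the fact, emphasized in the discussion above, that the core geodesic of the filling solid torus becomes the unique shortest geodesic once the filling is large. First I would invoke the hyperbolic Dehn surgery theorem (via Proposition \ref{heart} and the convergence $\mathbb{H}^3/\langle A_w,B_w\rangle \to \mathbb{H}^3/\langle A_\infty,B_\infty\rangle$ established above) to discard the finitely many non-hyperbolic fillings, so that $M(p/q)$ may be assumed hyperbolic. For these fillings the geometry is controlled: as $p/q\to\infty$ the manifolds $M(p/q)$ converge geometrically to $M$ (see \cite{ben}), and the core geodesic $\gamma_{p/q}$ has length tending to $0$, comparable to the quantity $2\pi/(p^2+q^2)$ computed above.

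The key geometric input is to fix a constant $\epsilon>0$ smaller than both the Margulis constant and the length of the shortest closed geodesic of $M$, which is positive since $M$ is a fixed finite-volume hyperbolic manifold. Geometric convergence of $M(p/q)$ to $M$ forces the thick parts to converge, so that for all but finitely many $p/q$ the only closed geodesic of $M(p/q)$ of length less than $\epsilon$ is the core $\gamma_{p/q}$: any other persistently short geodesic would survive in the geometric limit and produce a closed geodesic of length at most $\epsilon$ in $M$, contradicting the choice of $\epsilon$. I expect this step, pinning down that the core is the \emph{unique} shortest geodesic for cofinitely many fillings, to be the main obstacle, since it is where the analytic control from the Dehn surgery theorem must be combined with the Margulis lemma and the geometric topology of \cite{ben}; everything afterward is essentially formal.

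Granting this, any isometry $\phi$ of $M(p/q)$ preserves length, hence permutes the closed geodesics of length less than $\epsilon$; as the core is the only such geodesic, $\phi(\gamma_{p/q})=\gamma_{p/q}$ setwise. Consequently $\phi$ preserves the canonical maximal embedded Margulis tube $T$ about $\gamma_{p/q}$, and therefore preserves its complement $M_0:=M(p/q)\setminus T$, which is diffeomorphic to the compact core of $M$ because drilling out the core geodesic recovers the original cusped manifold topologically. Restriction then defines a map
\[
\Phi:\text{Isom}(M(p/q)) \to \text{MCG}(M)\cong \text{Out}(\pi_1 M)\cong\text{Isom}(M),
\]
where the last two isomorphisms are Mostow rigidity for the finite-volume manifold $M$. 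This $\Phi$ is a homomorphism because restriction to $M_0$ respects composition.

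Finally I would check injectivity. Suppose $\phi$ lies in the kernel of $\Phi$, so that $\phi|_{M_0}$ is isotopic to the identity and $\phi_*$ is an inner automorphism of $\pi_1(M)=\pi_1(M_0)$. Since $\phi$ fixes $\gamma_{p/q}$ it preserves the filling slope, so $\phi_*$ descends along the surjection $\pi_1(M)\twoheadrightarrow\pi_1(M(p/q))$ to an inner automorphism of $\pi_1(M(p/q))$; thus $\phi$ is trivial in $\text{Out}(\pi_1 M(p/q))$. Applying Mostow rigidity once more, now to $M(p/q)$, so that $\text{Isom}(M(p/q))\cong\text{Out}(\pi_1 M(p/q))$, gives $\phi=\text{id}$. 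Hence $\Phi$ is injective and $\text{Isom}(M(p/q))$ is isomorphic to a subgroup of $\text{Isom}(M)$, as claimed.
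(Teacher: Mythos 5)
Your proposal is correct and essentially reproduces the paper's proof: both discard the finitely many non-hyperbolic fillings via Thurston's Dehn surgery theorem (Proposition \ref{heart}), use the convergence $M(p/q)\to M$ and the thick--thin decomposition to produce a copy of $M$ inside $M(p/q)$ that every isometry must preserve, restrict to obtain a homomorphism into $\text{Out}(\pi_1(M))\cong\text{Isom}(M)$ by Mostow rigidity, and verify injectivity by a fundamental-group argument. The only cosmetic difference is that you justify invariance by identifying the core as the unique shortest geodesic and preserving its Margulis tube (the Kojima-style argument the paper itself quotes), whereas the paper's proof notes directly that the $\epsilon$-thick part is carried to itself because injectivity radius is an isometry invariant.
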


\begin{proof}

As discussed in Proposition \ref{heart}, the basis of the homology of the end induces a map $i^*:\mathcal{R}(M)\to\mathcal{R}(T^2)$, with image containing an open neighborhood, $\mathcal{U}=\mathcal{U}(\rho_T)$. The inverse image of the neighborhood in the Dehn filling space, $(DF)^{-1}(\mathcal{U})$ is then an open neighborhood of $\infty\in\overline{\C}$, so it contains all but a finite number of
the members of
\[
S:=\{p+q{\bf i}\in \Z[{\bf i}] | \hbox{gcd}(p,q)=1\}\subseteq\overline{\C}\,.
\]
For each point in $(DF)^{-1}(\mathcal{U})\cap S$ there is a corresponding representation in $\mathcal{R}(T^2)$, and a representation $\rho_{p+{\bf i}q}^M$ in $\mathcal{R}(M)$ that maps to it. This representation is the holonomy of possibly several incomplete hyperbolic structures on $M$, but provided one chooses a small enough neighborhood $\mathcal{U}$ there will be a unique hyperbolic structure with this holonomy close to the complete structure on $M$. In the discussion before this theorem, we saw that the completion of this structure on the cover corresponding to the subgroup the end was just the result of performing $p/q$ surgery on the end. In fact, the completion of this incomplete structure on $M$ is just a hyperbolic structure on $M(p/q)$. Up to this point we have just been copying the proof of Thurston's Dehn surgery theorem.

Now the hypothesis that $M$ has one cusp end and is thus diffeomorphic to the interior of a compact manifold with boundary a union of tori implies that the complete hyperbolic metric on $M$ has finite volume (\cite{ben} page 157). The space of complete, finite volume hyperbolic $3$-manifolds, $\mathcal{F}$, has a natural topology arising from the corresponding subgroups of $\text{PSL}_2\C$.  The manifolds in a neighborhood of a fixed hyperbolic $3$-manifold will be similar to it in a number of ways. In particular there is an $\epsilon>0$ and neighborhood (in $\mathcal{F}$) of our starting manifold, $M$, such that the $\epsilon$-thick part of every manifold in this neighborhood is homeomorphic to the $\epsilon$-thick part of $M$. This follows from the proof of Theorem E.2.4 of \cite{ben}.

Recall the thick-thin decomposition of hyperbolic $3$-manifolds. For $\epsilon>0$ the thin part of a hyperbolic manifold, $N$, is the closure of the set of points having radius of injectivity less than $\epsilon$. It is denoted by $N_{(0,\epsilon]}$. The thick part of $N$ is the union of the set of points having injectivity radius greater than or equal to $\epsilon$. It is denoted by $N_{[\epsilon,\infty)}$. The Margulis lemma, \cite{grom}, implies that there is a constant independent of the $3$-manifold such that the $\epsilon$-thin part of a finite volume, orientable $3$-manifold will be a union of tubes $D^2\times S^1$ about short closed geodesics and cusps $T^2\times [0,\infty)$, provided $\epsilon$ is smaller than this Margulis constant, \cite{ben}.
Since our starting manifold, $M$, has just one cusp end, the thin part will consist of the one cusp plus a finite collection of tubes. Taking the $\epsilon$ sufficiently small we can be sure that the thin part of this manifold has no tubes. Thus $M$ is homeomorphic to the interior of its thick part, and the symmetries of the thick part are the same as the symmetries of $M$.

At the start of the proof we saw that there were elements of the representation space corresponding to $p+{\bf i}q$ for $p^2+q^2$ sufficiently large, and these elements
$\rho_{p+{\bf i}q}$ approach $\rho_M$. Furthermore the representations $\rho_{p+{\bf i}q}$ correspond to the surgery manifolds $M(p/q)$ and we have $M(p/q)\to M$ as ${p+{\bf i}q}\to\infty$. Thus when $p^2+q^2$ is sufficiently large the thick parts are homeomorphic, i.e. $M(p/q)_{[\epsilon,\infty)}$ is homeomorphic to $M_{[\epsilon,\infty)}$. Because the thick part is defined geometrically, any isometry $f$ of $M(p/q)$ must take the thick part to itself.

Thus we may associate to each isometry of $M(p/q)$ (for large $p^2+q^2$) the diffeomorphism (and hence mapping class) of $M$ obtained by restricting $f$ to the interior of the thick part of $M(p/q)$. One might worry that this could send a non-trivial isometry to a trivial mapping class, but that would be worrying too much. Indeed,
the fundamental group of $M(p/q)$ is obtained by adding one relation to the fundamental group of $M$ which we now identify with the interior of the thick part. One can take (representatives of) generators of the fundamental group of $M$
and see what happens to each under the image of $f$. Assuming that $f|_M$ is isotopic to the identity, one can let $f|_{M,t}$ represent the isotopy. This gives natural tails for each map in the family of the isotopy. Indeed, when $x_0$ is the base point $\gamma_t(s)=f|_{M,st}(x_0)$ is the natural tail. The automorphism induced by $f$ takes a generator $\alpha$ to $\gamma_1*(f\circ \alpha)*\gamma_1^{-1}$. This is homotopic (rel $x_0$) to $\alpha$ via $\gamma_t*(f|_{M,t}\circ \alpha)*\gamma_t^{-1}$, so the induced automorphism is trivial implying that $f$ itself was trivial to begin with.
\end{proof}

In the process of proving that any finite group may be realized as the isometry group of some hyperbolic $3$-manifold, Kojima \cite{K} mentions this result writing, ``Take any isometry of $M(L,P/Q)$, then since [the link of core geodesics] $L^*$ consists of shortest geodesics by the choice of $P/Q$, [made to insure that these were shortest geodesics] it leaves $L^*$ invariant. Thus $\text{Isom}(M(L;P/Q))=I_{L^*}$.'' Here $I_{L^*}$ denotes the isometries of $M$ fixing $L$. Kojima works in the case of a link consisting of simple geodesics in $M$ with isometries of $M$ acting simply transitively on the components of the link. His $P/Q$ surgery coefficient indicates that he is using the same surgery slope (as determined by the isometries) when filling each component.  Our description above provides a bit more exposition on this quote from his paper.

\begin{figure}[!ht]  
\hskip100bp
\includegraphics[width=50mm]{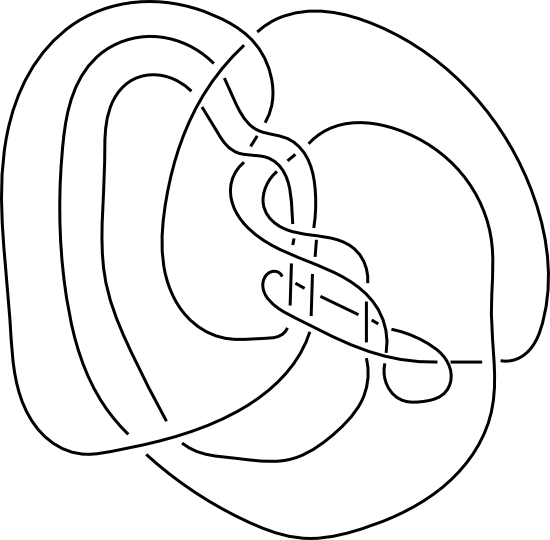}
\caption{An asymmetric knot with an exceptional symmetry}\label{92351111}
\end{figure}

The knot in figure \ref{92351111} demonstrates the exceptional symmetries that can occur as described in the theorem. It has no symmetries, so all but a finite number of Dehn fillings of this knot have no symmetries. This implies that none of these infinitely many manifolds $2$-fold branched cover any $3$-manifold. However, $-2$-surgery on this large knot is equivalent to $2$-surgery on the $8_6$ knot. This knot has the dihedral group of order $4$ as a symmetry group, and these symmetries extend to the surgeries on the knot. Two of the involutions fix points in the knot so the corresponding quotients are $S^3$. The axis of the other involution avoids the knot. Even so, the knot projects to an unknot in the quotient. The induced framing is $1$ so the quotient is still $S^3$. To see that $-2$ surgery on the large knot is the same as $2$ surgery on the $8_6$ knot, one starts with the $9^2_{35}$ link displayed in figure \ref{92n86}. Each component of this link is unknotted. Blowing down (the opposite of a Rolfsen twist) the $1$-framed component leads to the large knot from figure \ref{92351111}. Blowing down the $-1$-framed component leads to the $8_6$ knot.

\begin{figure}[!ht]  
\hskip55bp
\[\begin{array}{cc}\begin{array}{c}
\includegraphics[width=40mm]{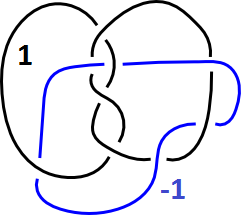} \end{array}&\hskip30bp
\begin{array}{c}\includegraphics[width=45mm]{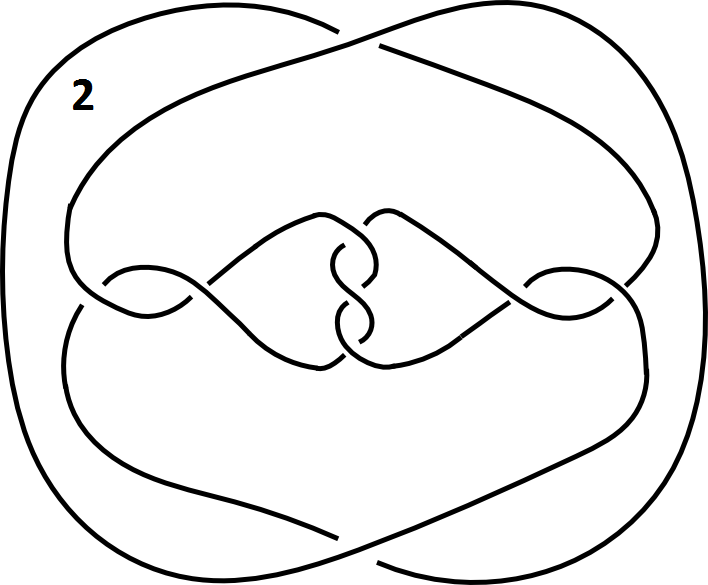}\end{array}
\end{array}
\]
\caption{The $9^2_{35}$ link and $8_{6}$ knot}\label{92n86}
\end{figure}

The theorem states that every symmetry of a generic filling is a symmetry of the open manifold. It does not state that every symmetry of the open manifold extends to the filling. We have seen this in the example of the $8_{17}$ knot displayed in figure \ref{810817}. In fact, for homological reasons a symmetry of type $(S^0,S^0)$ can only extend across trivial or zero surgery. On the other hand we have seen that every symmetry of type $(S^1,S^0)$, or $(S^1,\emptyset)$ does extend across every surgery, so the symmetries extend for the examples that we care most about.

One of the reasons the $5_2$ knot from our first example is a good knot to consider is that it also admits exceptional symmetries, as well as exceptional surgeries. It has symmetry group $\mathbb{Z}_2\oplus\mathbb{Z}_2$ with two involutions of type $(S^1,S^0)$ and one involution of type $(S^1,\emptyset)$. By our earlier discussion these symmetries extend across all surgeries on this knot, and for all but a finite number of exceptions these surgered manifolds have symmetry $\mathbb{Z}_2\oplus\mathbb{Z}_2$. The manifold $S^3_{5_{2}}(1/3)$ is hyperbolic and has symmetry $\mathbb{Z}_2\oplus\mathbb{Z}_2$ as expected.

The manifold $S^3_{5_{2}}(1)$ is not hyperbolic. To see this we do the same trick that we did to understand the exceptional symmetry in the large knot -- we find a two component link half way in between. A Rolfsen twist on an unknot that links the middle $3/2$ twisted band in figure \ref{5_28_5} will untwist it two times. The framing on this new curve will be $1/2$. This operation will unknot the $5_2$ knot. Since the framing on the new unknot is $1$, we can blow it down to see that this manifold is the same as $1/2$ surgery on the right hand trefoil. This in turn is the Seifert Fiber Manifold over $S^2$ with three singular fibers and invariants $\{1, (Oo,0), (-2,1), (-3,1), (-11,2)\}$.

The manifold
$S^3_{5_{2}}(1/2)$ has symmetry $D_{2\cdot 4}$ not $\mathbb{Z}_2\oplus\mathbb{Z}_2$ as expected. To see this, note that a Rolfsen twist about an unknot that links one of the half-twisted bands and reverses the crossing will unknot the $5_2$ knot resulting in a link of two unknotted and algebraically unlinked components. One will have filling $1/2$ and the other will have filling $1$. Doing the Rolfsen twist to untwist the $1/2$-framed component will result in $1$ filling on the $8_3$ knot. The $D_{2\cdot 4}$ is then manifest.


\section{Branched Virtual Fibration}
The virtual fibration theorem is a major new result in the theory of $3$-manifolds.
It states that any hyperbolic $3$-manifold admits a finite cover that fibers over the circle. Thurston asked it as a question in 1982. In 2007 Agol proved that any  manifold with what is known as a virtually residually finite rationally solvable fundamental group is virtually fibered.
Agol and Wise proved the virtual fibering conjecture for any finite volume complete hyperbolic 3-manifold (Agol for closed, Wise for non-compact, \cite{A,W}.)

If finite cover is relaxed to finite {\it branched} cover one can see that any $3$-manifold has a $2$-fold branched cover that fibers over the circle. This branched virtual fibration result was first proved by Sakuma using Heegaard splittings, \cite{sa}. Later Brooks showed that the fibered manifold could always be chosen to be hyperbolic, \cite{Br}, and Montesinos gave an alternate argument for the stronger result using open book decompositions, \cite{mont2}.

\section{Quotient Tabulation}

The symmetries of knots with fewer than $11$ crossings were tabulated in \cite{HeW,KS}. For many knots this is all the information that is needed to understand the possible quotients by $2$-fold branched covers. However, it is not always possible to infer the type of the symmetry of an element of one of these groups from the tabulated information. Furthermore, for symmetries of type $(S^1,\emptyset)$ one needs to know if the image of the knot in the quotient is knotted or unknotted in order to decide if the homology sphere fillings $2$-fold branched cover a non-trivial manifold. Rather than just including the knots that need new information (knotted or unknotted quotient) and referring the reader to the earlier tabulations for the rest, we include all knots in this tabulation. We do just concentrate on the involutions, but remark here that the following knots have higher order symmetries: $4_1$, $6_3$, $7_4$, $7_7$, $8_3$, $8_9$, $8_{12}$, $9_{10}$, $9_{17}$, $9_{23}$, $9_{31}$, $10_{17}$, $10_{33}$, $10_{37}$, $10_{43}$, $10_{45}$, $10_{157}$ (all with $D_4$ symmetry), $9_{41}$, $9_{47}$, $9_{49}$ (all with $D_3$ symmetry), $9_{35}$, $9_{40}$, $9_{48}$, $10_{75}$ (all with $D_6$ symmetry),  $8_{18}$ ($D_8$ symmetry), and $10_{123}$ ($D_{10}$ symmetry).

\renewcommand{\arraystretch}{2}
\hskip-25bp\begin{tabular}{p{8 cm }p{5 cm}}  
\hline
{\bf Symmetry Type} & {\bf Knot List} \\ \hline
No symmetry -- generic surgeries on these knots do not $2$-fold branched cover any manifold. (29 knots) & $9_{32}$, $9_{33}$, $10_{80}$, $10_{82}$, $10_{83}$, $10_{84}$, $10_{85}$, $10_{86}$, $10_{87}$, $10_{90}$, $10_{91}$, $10_{92}$, $10_{93}$, $10_{94}$, $10_{95}$, $10_{102}$, $10_{106}$, $10_{107}$, $10_{110}$, $10_{117}$, $10_{119}$, $10_{148}$, $10_{149}$, $10_{150}$, $10_{151}$, $10_{153}$  \\ \hline

Only type $(S^0,S^0)$ symmetry -- generic surgeries on these knots do not $2$-fold branched cover any manifold, but $0$ surgery will  be a $2$-fold cover of a non-orientable manifold. (7 knots) & $8_{17}$, $10_{79}$, $10_{81}$, $10_{88}$, $10_{109}$, $10_{115}$, $10_{118}$  \\ \hline

Only type $(S^1,S^0)$ symmetry -- generic surgeries on these knots will be $2$-fold branched covers over $S^3$, but do not $2$-fold branched cover any other manifold. (79 knots) & $8_{10}$, $8_{16}$, $8_{20}$, $9_{22}$, $9_{24}$, $9_{25}$, $9_{29}$, $9_{30}$, $9_{34}$, $9_{36}$, $9_{38}$, $9_{39}$, $9_{41}$, $9_{42}$, $9_{43}$, $9_{44}$, $9_{45}$, $9_{47}$, $9_{49}$,
$10_{46}$, $10_{47}$, $10_{48}$, $10_{49}$, $10_{50}$, $10_{51}$, $10_{52}$, $10_{53}$, $10_{54}$, $10_{55}$, $10_{56}$, $10_{57}$, $10_{59}$, $10_{62}$, $10_{65}$, $10_{70}$, $10_{71}$, $10_{72}$, $10_{73}$, $10_{77}$, $10_{89}$, $10_{96}$, $10_{97}$, $10_{100}$, $10_{101}$, $10_{103}$, $10_{104}$, $10_{105}$, $10_{108}$, $10_{111}$, $10_{112}$, $10_{113}$, $10_{114}$, $10_{116}$, $10_{121}$, $10_{125}$, $10_{126}$, $10_{127}$, $10_{128}$, $10_{129}$, $10_{130}$, $10_{131}$, $10_{132}$, $10_{133}$, $10_{134}$, $10_{135}$, $10_{137}$, $10_{140}$, $10_{143}$, $10_{152}$, $10_{154}$, $10_{156}$, $10_{158}$, $10_{159}$, $10_{160}$, $10_{161}$, $10_{162}$, $10_{163}$, $10_{164}$, $10_{165}$ \\ \hline

\end{tabular}

\vfill\newpage
\renewcommand{\arraystretch}{2}
\hskip-25bp\begin{tabular}{p{8 cm }p{5 cm}}  
\hline
{\bf Symmetry Type} & {\bf Knot List} \\ \hline

Only type $(S^1,\emptyset)$ symmetry with unknotted quotient -- generic surgeries on these knots will be $2$-fold branched covers over a lens space. This unique possible quotient will be $S^3$ exactly when the filling yields an integral homology sphere upstairs, i.e. $1/n$ filling.  & $10_{67}$, $10_{147}$  \\ \hline

Only type $(S^1,\emptyset)$ symmetry with knotted quotient -- generic surgeries on these knots will be $2$-fold branched covers over some non-simply-connected manifold, but do not $2$-fold branched cover $S^3$. & $10_{98}$  \\ \hline

Both types $(S^1,S^0)$ symmetry and $(S^1,\emptyset)$ symmetry with unknotted quotient -- generic surgeries on these knots will be $2$-fold branched covers over $S^3$, as well as some lens space (unless the filling yields an integer homology sphere in which case the only possible quotient will be $S^3$). (93 knots) & $4_{1}$, $5_{2}$, $6_{1}$, $6_{2}$, $6_{3}$, $7_{2}$, $7_{3}$, $7_{4}$, $7_{5}$, $7_{6}$, $7_{7}$, $8_{1}$,  $8_{2}$, $8_{3}$, $8_{4}$, $8_{6}$, $8_{7}$,

$9_{2}$, $9_{3}$, $9_{4}$, $9_{5}$, $9_{6}$, $9_{7}$, $9_{8}$, $9_{9}$, $9_{10}$, $9_{11}$, $9_{12}$, $9_{13}$, $9_{14}$, $9_{15}$, $9_{17}$, $9_{18}$, $9_{19}$, $9_{20}$, $9_{21}$, $9_{23}$, $9_{26}$, $9_{27}$, $9_{31}$, $9_{35}$, $9_{37}$,
$9_{46}$, $9_{48}$,

$10_{1}$, $10_{2}$, $10_{3}$, $10_{4}$, $10_{5}$, $10_{6}$, $10_{7}$, $10_{8}$, $10_{9}$, $10_{10}$, $10_{11}$, $10_{12}$, $10_{13}$, $10_{14}$, $10_{15}$, $10_{16}$, $10_{17}$, $10_{18}$, $10_{19}$, $10_{20}$, $10_{21}$, $10_{22}$, $10_{23}$, $10_{24}$, $10_{25}$, $10_{26}$, $10_{27}$, $10_{28}$, $10_{29}$, $10_{30}$, $10_{31}$, $10_{32}$, $10_{33}$, $10_{34}$, $10_{35}$, $10_{36}$, $10_{37}$, $10_{38}$, $10_{39}$, $10_{40}$, $10_{41}$, $10_{42}$, $10_{43}$, $10_{44}$, $10_{45}$, $10_{68}$, $10_{69}$, $10_{74}$, $10_{75}$, $10_{145}$, $10_{146}$
 \\ \hline

Both types $(S^1,S^0)$ symmetry and $(S^0,\emptyset)$ symmetry -- generic surgeries on these knots will only be $2$-fold branched covers over $S^3$. & $10_{99}$, $10_{123}$ \\ \hline

\end{tabular}
\vfill\newpage
\renewcommand{\arraystretch}{2}
\hskip-25bp\begin{tabular}{p{8 cm }p{5 cm}}  
\hline
{\bf Symmetry Type} & {\bf Knot List} \\ \hline

Both types $(S^1,S^0)$ symmetry and $(\emptyset,\emptyset)$ symmetry -- any surgery on one of these knots will be a $2$-fold branched covers over $S^3$, as well as a non-trivial $3$-manifold. & $10_{155}$, $10_{157}$
\\ \hline

Both types $(S^1,S^0)$ symmetry and $(S^1,\emptyset)$ symmetry with knotted quotient -- non-trivial surgeries on these knots will be $2$-fold branched covers over $S^3$, as well as some non-trivial $3$-manifold. (22 knots) &
$8_5$, $8_{15}$, $8_{21}$, $9_{16}$,  $9_{28}$,  $9_{40}$,  $10_{58}$, $10_{60}$, $10_{61}$, $10_{63}$, $10_{66}$, $10_{76}$, $10_{78}$, $10_{120}$, $10_{122}$, $10_{136}$, $10_{138}$, $10_{139}$, $10_{141}$, $10_{142}$, $10_{144}$

\\ \hline

Torus knots with both types $(S^1,S^0)$ symmetry and $(S^1,\emptyset)$ symmetry with unknotted quotient -- any surgery on one of these knots will be $2$-fold branched covers over $S^3$, as well as some lens space (unless the filling yields an integer homology sphere in which case the only possible quotient will be $S^3$. & $3_{1}$, $5_{1}$, $7_{1}$, $9_{1}$  \\ \hline

Torus knots with both types $(S^1,S^0)$ symmetry and $(\emptyset,\emptyset)$ symmetry -- any surgery on one will be a $2$-fold branched cover over $S^3$, as well as a $2$-fold cover (or branched cover) over some other manifold.   & $10_{124}$    \\ \hline

\end{tabular}

\bibliography{2fold}

\end{document}